\newtheorem{theorem}{Theorem}[section]
\newtheorem{proposition}[theorem]{Proposition}
\newtheorem{lemma}[theorem]{Lemma}
\newtheorem{corollary}[theorem]{Corollary}
\theoremstyle{definition}
\newtheorem{definition}[theorem]{Definition}
\newtheorem{example}[theorem]{Example}
\def\K{\mathbb{K}}
\def\k{\mathbf{k}}
\def\R{\mathbb{R}}
\def\Z{\mathbb{Z}}
\def\T{\mathbb{T}}
\renewcommand\P{\mathbb P}
\def\b{\beta}
\begin{document}

\title{Singular tropical hypersurfaces}
\author{Alicia Dickenstein and Luis F. Tabera}
\date{}

\address{\small \rm Departamento de Matem\'atica, Universidad de Buenos Aires,
C1428EGA Buenos Aires, Argentina}
\email{alidick@dm.uba.ar}
\thanks{AD is partially supported by UBACYT X064, CONICET
 PIP 112-200801-00483 and ANPCyT 2008-0902, Argentina}
\address{\small \rm Departamento de Matem\'aticas, Estad\'istica y
Compu\-taci\'on, Universidad de Can\-ta\-bria,
39071, Santander, Spain}
\email{taberalf@unican.es}
\thanks{LFT is partially supported by the coordinated project
MTM2008-04699-C03-{03} and a Research Postdoctoral Grant, Ministerio de Ciencia
e Innovaci\'on, Spain.}

\begin{abstract}
We study the notion of singular tropical hypersurfaces of any dimension. We
characterize the singular points in terms of tropical Euler derivatives
and we give an algorithm to compute all singular points. We also describe 
non-transversal intersection points of planar tropical curves.
\end{abstract}
\maketitle

\section{Introduction}\label{sec:introduction}

The concept of a singular point of a tropical variety is not well established
yet.  A natural definition is the
following. Let $\mathbb{K}$ be an algebraically closed field of characteristic
$0$ with a valuation $val: \K^* \to \R$. We say that a point $q$ in a tropical
variety $V \subset \R^d$ is singular if there exists a singular algebraic
subvariety of the torus $({\K}^*)^d$, with tropicalization $V$, with a
singular point of valuation $q$ (see Definition~\ref{def:tropical_singularity}).
This definition of singularity in terms of the tropicalization of classical
algebraic varieties has been considered in~\cite{sing-fixed-point} in the
case $d=2$ of planar curves, and indirectly in~\cite{tropical-discriminant}
and~\cite{Tesis-master-Ochse}, in the general hypersurface case. 
Thus, in principle, one should study all the
\emph{preimages} of $V$ under the valuation map to decide whether $V$ is
singular. We present an equivalent formulation when $V$ is a hypersurface
defined by a tropical polynomial with prescribed support $A$ and the residue
field of $\mathbb{K}$ has also characteristic $0$ (but our approach can be
extended if this hypothesis is relaxed). 

Recall that given a finite set $A \subseteq \mathbb{Z}^d$, Gel'fand, Kapranov
and Zelevinsky \cite{GKZ-book} defined and studied the main properties of the
$A$-discriminant $\Delta_A$ associated to the family of hypersurfaces with
support $A$. Let $\nabla_0$ be the variety of Laurent polynomials $F$ with
coefficients in $\K$ and support in $A$ which define a singular hypersurface
$\{F=0\}$ in the torus $(\mathbb{K}^*)^d$. If $\nabla_0$ has codimension one,
then there exists a unique (up to sign) polynomial $\Delta_A\in \mathbb{Z}[a_i|
i\in A]$ such that if $F=\sum_{i\in A} a_ix^i$ has a singular point in
$(\mathbb{K}^*)^d$ then $\Delta_A((a_i)_{i \in A})=0$. This polynomial is called
the {\em $A$-discriminant} and its locus coincides with the dual variety $X_A^*$
of the equivariantly embedded toric variety $X_A$ rationally parametrized by the
monomials with exponents in $A$. If codim$(\nabla_0)>1$ (in which case $X_A$ is
said to be \emph{defective}), the polynomial $\Delta_A$ is defined to be the
constant polynomial $1$, and we call in this case $A$-discriminant the ideal of
$X_A^*$ . The varieties $X_A$ and $X_A^*$, as well as the $A$-discriminant,
are affine invariants of the configuration $A$.

A tropical polynomial $f =\bigoplus_{i\in A} p_i \odot w^i$ with coefficients in
$\R$ defines a singular tropical hypersurface precisely when its vector of
coefficients $p$ lies in the tropicalization $\mathcal{T}(X_A^*)$ of the
$A$-discriminant. The concept of singularities of tropical varieties, as well as
the
concept of tropical tangency, can thus be addressed via the tropicalization of
the
$A$-discriminant described in~\cite{tropical-discriminant}. 
We call $\widetilde{A} \in \Z^{d \times A}$ the integer matrix with columns
$(1,i), i \in A$. Theorem~1.1 in \cite{tropical-discriminant} states that the
tropical discriminant equals the Minkowski sum of the co-Bergman fan ${\mathcal
B}^*(\widetilde{A})$ and the row space of the matrix $\widetilde{A}$. The
co-Bergman fan ${\mathcal B}^*(\widetilde{A})$ is the tropicalization of the
kernel of $\widetilde{A}$, or equivalently, the tropicalization of the space of
affine relations among the vectors $i \in A$. Following the notations in
Section~\ref{sec:Euler_derivatives} (cf. also the  
discussion in Section~\ref{sec:Bergman}), we can give an equivalent appealing
definition of singular point of the tropical hypersurface defined by $f =
\bigoplus_{i\in A} p_i \odot w^i$. Let $\phi(x) = \sum_{i\in A} x_i$ be the
linear form with all  coefficients equal to $1$, and denote by $\Phi$ the
tropical hypersurface $\Phi:= \mathcal{T}(Trop(\phi))$ consisting of those
vectors $v \in \R^A$ for which the minimum of the coordinates of $v$ is attained
at least twice. Clearly, $\mathcal{B}^*(\widehat{A})\subseteq \Phi$. 
We then have
\[ \mathcal{T} (f) =\{ w \in \R^d : w \cdot A + p \in \Phi\}, \] 
and the singularities ${\rm sing}(\mathcal{T}(f)) \subseteq \mathcal{T} (f)$ are
described by
\[ {\rm sing}(\mathcal{T}(f)) = \{ q \in \R^d : q \cdot A + p \in
\mathcal{B}^*(\widehat{A})\}.\]

Deciding whether a given tropical polynomial defines a singular tropical
hypersurface, amounts with this approach to finding a way of writing its vector
of coefficients $p$ as the sum of an element in ${\mathcal B}^*(\widetilde{A})$
plus
an element in the rowspan of $\widetilde{A}$. This is particularly involved when
$X_A$ is defective, and there was no algorithm known in the general case
(cf.~\cite{Tesis-master-Ochse}, where an algorithm is presented under some
geometric assumptions, or the arguments in the proof of
\cite[Lemma~3.12]{sing-fixed-point}). 

Instead, we give in Theorem~\ref{teo:singular_point_by_derivatives} a direct
characterization of tropical singular points in terms of analogs of Euler
derivatives of tropical polynomials, which allows us to recover Theorem~1.1
in~\cite{tropical-discriminant}. Our tropical approach translates into an
algorithm to decide whether the tropical variety associated to $f$ is singular
and to detect all the singular points.

Note that, given a Laurent polynomial $F = \sum_{i\in A} a_i x^i$, if the vector
of valuations $q= (val(a_i))_{i \in A}$ defines a non singular tropical
hypersurface, we get a ``certificate'' that $F$ defines a non singular
hypersurface in the torus $(\K^*)^d$. However, it is not possible to find a
simple combinatorial formula to describe all singular points because the
situation is not, as one could expect, completely local (cf.
Proposition~\ref{prop:anotherpt}, and the concept of $\Delta$-equivalence by
Gel'fand, Kapranov and Zelevinsky). We give  several combinatorial
conditions which characterize singular tropical hypersurfaces.

In \cite{mikha} or \cite{inflection-points}, tropical smooth curves are defined
in terms of coherent subdivisions where all points in $A$ are marked, and which
define primitive triangulations of the convex hull $N(A)$ of $A$ (cf. also the
concept of singular tropical curves of maximal dimensional type in
\cite{sing-fixed-point}). If the dual subdivision of $N(A)$ induced by a
tropical curve ${\mathcal T}(f)$ is a primitive triangulation, ${\mathcal T}(f)$
will always be a smooth curve in our sense too. But our definition allows for
certain non primitive triangulations which correspond to smooth tropical
hypersurfaces, that is, tropical hypersurfaces that cannot be the
tropicalization of an algebraic hypersurface with a singularity in the algebraic
torus $(\mathbb{K}^*)^2$ (see Examples~\ref{ex:triang2},~\ref{ex:nontriang2} and
Proposition~\ref{teo:singular_dim_1_and_2}). When $A$ does not admit a
unimodular triangulation or if not all the lattice points in the convex hull of
$A$ are marked, we can have smooth points at facets where the weight (as it is
currently defined, see \cite{Mik05}) is $> 1$. Note that we concentrate on {\em
affine} singular points, that is points in the ``torus'' $\R$ of the tropical
semifield.

Our study includes all coherent subdivisions of $A$. Thus, we refine the
definition in \cite{mikha} and we explore the whole $A$-discriminant. Our method
also generalizes trivially to hypersurfaces in arbitrary dimension. In the last
section, we apply our tools to define and study the non-transversal
intersections of two tropical curves with fixed monomial support, that is, the
tropicalization of mixed discriminants of bivariate polynomials. 

\section{Tropical singularities through Euler
derivatives}\label{sec:Euler_derivatives}

We fix throughout the text a finite lattice set $A$ in $\Z^d$ of cardinality
$n$. We will assume without loss of generality that the $\Z$-linear span of $A$
equals $\Z^d$. The $\R$-{\em affine} span of a subset $S$ in $\R^d$ will be
denoted by $\langle  S \rangle$.

We consider the tropical semifield $(\T,\oplus, \otimes)$, where $\T = \R \cup
\{\infty\}$ and the tropical operations are defined by $ w \oplus w'=
\min\{w,w'\}$, $ w \odot w'= w + w'$. Our object of study are {\em tropical
polynomials} $f =\bigoplus_{i\in A} p_i \odot w^{\odot i} \in \mathbb{T}[w_1,
\ldots, w_d]$ with {\em support $A$}, that is
$p_i \in \mathbb{R}$ for {\em all} $i$.
To simplify the notation, we will write $w^{\odot j} = w^j = \langle  j, w
\rangle$. 

The {\em tropical hypersurface} defined by a non zero tropical polynomial $f$
with support $A$,  is  the set
{\small
\begin{equation}\label{eq:Vf} \mathcal{T}(f) \, = \,
\{w \in \R^d : \exists i_1 \not= i_2 \in A \text{ such that }
{f(w)} = \langle  i_1, w \rangle + p_{i_1} = \langle  i_2, w \rangle +
p_{i_2}\}. 
\end{equation}
}
Any tropical hypersurface is a rational polyhedral complex. For any $q \in
\mathcal{T}(f)$, its associated cell $\sigma^*$ is the closure of all the points
$q' \in \mathcal{T}(f)$ for which $f(q)$ and $f(q')$ are attained at the same
subset $\sigma$ of $A$. Each cell $\sigma^*$ comes with a marking, given by the
subset $\sigma$. So, a tropical hypersurface associated with a tropical
polynomial with fixed support $A$ will be a {\em marked rational polyhedral
complex}. This marking will be transparent in the notation. We refer to the
beginning of section~\ref{sec:Bergman} for further details.

We will also work with Laurent polynomials $F$ with support $A$ and coefficients
in an algebraically closed field $\mathbb{K}$ of characteristic $0$, that is
\begin{equation} \label{eq:F}
F(x) = \sum_{i\in A} a_i x^i \in \mathbb{K}[x_1^{\pm1}, \ldots, x_d^{\pm 1}].
\end{equation}
We will assume that the field $\K$ is provided with a rank-one non-archimedean
valuation $val: \K \to \R$, and that the residue field $\k$ of $\K$ is also of
characteristic zero.
The {\em tropicalization} of a non zero polynomial $F$ as in \eqref{eq:F} is the
tropical polynomial
\begin{equation}\label{eq:TdeF}
f = Trop(F) = \bigoplus_{i\in A} val(a_i) \odot w^{i}.
\end{equation}
When the valuation group is not the whole of $\R$, we will suppose that the
coefficients $p_i$ of a tropical polynomial $f = \bigoplus_{i\in A} p_i \odot
w^{\odot i}$ or a tropical point $q$ that we want to lift {\em lie in the image}
of the valuation map. To accompany our notions in the classical and tropical
settings, the elements of $\K, \K^d$ and $\K^n$ will be denoted systematically
by the letters $a,b,c$, $x,y,z$ and the elements of $\T, \T^d$ and $\T^n$ by the
letters $p,q,w,v,l$. We will denote by $t$ a fixed element of $\K$ of valuation
one. The elements of $A\subset \Z^d$ with be denoted by the letter $i$.

We introduce now the notion of singular point of a tropical hypersurface.

\begin{definition}\label{def:tropical_singularity}
Let $A\subseteq \mathbb{Z}^d$ as before. Let $f=\bigoplus_{i\in A} p_i \odot w^i
\in \R[w_1,\ldots, w_d]$ be a tropical polynomial. Let $q$ be a point in the
tropical hypersurface $\mathcal{T}(f)$. Then, $q$ is a \emph{singular point} of
$\mathcal{T}(f)$ if there is a polynomial $F=\sum_{i\in A} a_i x^i \in
\K[x_1^{\pm1}, \ldots, x_d^{\pm 1}]$ and a point $b\in (\mathbb{K}^*)^d$ such
that $val(a_i)=p_i$, $val(b)=q$ and ${b}$ is a singular point of the algebraic
hypersurface $V(F)$ defined by $F$. If $\mathcal{T}(f)$ has a singular point, we
call it a \emph{singular tropical hypersurface}.
\end{definition}

For instance, if $A = \{0, \dots, m\} \in \Z$, with $m \ge 2$, and $f =
\bigoplus_{j=0}^m 0 \odot w^j$, then $q=0$ is always a singular point of
$\mathcal{T}(f)$  since for all $m$ there exist univariate polynomials of degree
$m$ with coefficients of valuation $0$ and multiple roots with valuation $0$
(just consider $F = (x-1)^m$ which has a multiple root at $1$). 

Let $L(w)$ be an {\em integral affine function} on $\R^d$,
\begin{equation}\label{eq:L}
L(w)=j_1 w_1+\ldots+ j_d w_d +\b,
\end{equation}
where $(j_1, \dots, j_d) \in \Z^d$ and $\b \in \Z$. The Euler derivative of a
tropical polynomial $f$ with support in $A$ with respect to $L$ is defined as
follows.

\begin{definition}\label{def:trop_derivative}
Let $f=\bigoplus_{i\in A} p_i \odot w^i$ and $L=j_1 w_1+\ldots+ j_d w_d +\b$ be
an integral affine function. The \emph{Euler derivative} of $f$ with respect to
$L$ is the tropical polynomial
\[\frac{\partial f}{\partial L}= \bigoplus_{i\in A, L(i) \not=0} p_i \odot
w^i.\]
\end{definition}

We also have the standard Euler derivative of classical Laurent polynomials. 

\begin{definition}\label{def:euler_derivative}
Let $F=\sum_{i\in A} a_i x^i \in \mathbb{K}[x_1^{\pm1}, \ldots, x_d^{\pm 1}]$
and $L=j_1 w_1+\ldots+ j_d w_d +\b$ be an integral affine function. We associate
to
$L$ the Euler vector field $L_\Theta = j_1 \Theta_1 + \ldots + j_d \Theta_d +
\b$, where $\Theta_j = x_j \frac{\partial}{\partial x_j}$ for all $j=1, \dots,
d$. The \emph{Euler derivative} of $F$ with respect to $L$ is the polynomial
\[\frac{\partial F}{\partial L}:= L_\Theta(F) = j_1 x_1\frac{\partial
F}{\partial x_1}+\ldots + j_dx_d\frac{\partial F}{\partial x_d} +\b F.\]
\end{definition}
It is clear that for any singular point $b \in (\K^*)^d$ of $V(F)$, it holds
that $\frac{\partial F}{\partial L}(b) =0$ for all integral affine functions
$L$. Note that if $L$ is the constant function $1$, then $\frac{\partial
F}{\partial L} = F$.

We relate the derivative of $F$ with respect to $L$ with the derivative with
respect to $L$ of its tropicalization.

\begin{lemma}\label{lem:LFf} Given a tropical polynomial $f$ with support $A$
and an integral affine function $L$, the equality
\[\frac{\partial f}{\partial L} = Trop\left(\frac{\partial F}{\partial
L}\right)\]
holds for any polynomial $F$ with support $A$ such that $Trop(F) = f$.
\end{lemma}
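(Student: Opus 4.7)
The plan is to carry out a direct computation on each monomial and compare supports and coefficients.

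First I would check how the classical Euler operator acts on a monomial: since $\Theta_k(x^i) = i_k x^i$, applying $L_\Theta = j_1 \Theta_1 + \cdots + j_d \Theta_d + \beta$ to $x^i$ yields $(j_1 i_1 + \cdots + j_d i_d + \beta)\, x^i = L(i)\, x^i$. By linearity, writing $F = \sum_{i \in A} a_i x^i$ (with all $a_i \neq 0$, since $Trop(F)=f$ has full support $A$), we obtain
\[
\frac{\partial F}{\partial L} \;=\; \sum_{i \in A} L(i)\, a_i\, x^i \;=\; \sum_{\substack{i \in A \\ L(i) \neq 0}} L(i)\, a_i\, x^i,
\]
so the classical support of $\partial F/\partial L$ is exactly $\{i \in A : L(i) \neq 0\}$, matching the set over which $\partial f/\partial L$ is defined in Definition~\ref{def:trop_derivative}.

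Next I would tropicalize. For each $i$ with $L(i) \neq 0$, the coefficient of $x^i$ in $\partial F/\partial L$ is $L(i)\, a_i$, and its valuation satisfies $val(L(i)\,a_i) = val(L(i)) + val(a_i) = val(L(i)) + p_i$. The key observation, which is the one spot where an assumption enters, is that $val(L(i)) = 0$: the number $L(i)$ is a nonzero integer, and because the residue field $\k$ has characteristic zero, every nonzero integer maps to a nonzero element of $\k$, hence has valuation $0$. Therefore $val(L(i)\,a_i) = p_i$, and
\[
Trop\!\left(\frac{\partial F}{\partial L}\right) \;=\; \bigoplus_{\substack{i \in A \\ L(i) \neq 0}} p_i \odot w^i \;=\; \frac{\partial f}{\partial L},
\]
as desired.

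The main (and essentially only) subtlety is handling the valuations of the integer coefficients $L(i)$, which is why the residue characteristic zero hypothesis is essential; without it one would have to track $val(L(i))$ and the statement would fail in general. Everything else is a mechanical unwinding of the two definitions of Euler derivative.
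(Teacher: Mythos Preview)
Your proof is correct and follows essentially the same approach as the paper: compute $\frac{\partial F}{\partial L} = \sum_{i\in A} L(i)\, a_i\, x^i$, then use the residue characteristic zero hypothesis to conclude that $val(L(i)) = 0$ whenever $L(i)\neq 0$, so the tropicalization matches Definition~\ref{def:trop_derivative}. Your write-up is in fact more explicit than the paper's, spelling out the monomial computation and the precise role of the residue field assumption.
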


\begin{proof}
Take any $F$ with $Trop(F)=f$. Note that the Euler derivative of $F$ with
respect to $L$ equals $ \frac{\partial F}{\partial L} \, = \, \sum_{i\in A} L(i)
a_i x^i.$ From our assumption that the residue field of $\K$ is also of
characteristic zero, it follows that $val(L(i)) =0$ whenever $L(i) \not=0$ and
$val(L(i)) = \infty$ otherwise. The result is then a direct consequence of
Definition~\ref{def:trop_derivative} of the Euler derivative with respect to $L$
in the tropical context. 
\end{proof}

As $A$ is finite, the set $\left\{\frac{\partial f}{\partial L}| L\right\}$,
with $L$ ranging over all possible integer affine linear functions, is finite
for any $f$ with support $A$.

\begin{example}\label{ex:nsconic}
Consider the tropical conic $f = 1 \oplus 0 \odot w_1 \oplus 0 \odot w_2 \oplus
0 \odot w_1\odot w_2 \oplus 1 \odot w_1^2 \oplus 1 \odot w_2^2$. Let $L_1 = w_1,
L_2 = w_2$ and let $F = a_{(0,0)}+ a_{(1,0)} x + a_{(0,1)} y +$ $a_{(1,1)} xy +
a_{(2,0)} x^2+ a_{(0,2)} y^2$ be any polynomial with tropicalization $f$. The
associated $A$ discriminant $\Delta_A$ equals $1/2$ of the determinant of the
matrix
\begin{equation}\label{eq:matrix}
\begin{pmatrix}
2 a_{(2,0)} & a_{(1,1)} & a_{(1,0)}\\
 a_{(1,1)} & 2 a_{(0,2)} & a_{(0,1)}\\
a_{(1,0)} & a_{(0,1)} & 2 a_{(0,0)}
\end{pmatrix},
\end{equation}
which is non zero since $ 2 a_{(1,0)} a_{(1,1)} a_{(0,1)}$ is the only term in
the expansion of the determinant with lowest valuation $0$. Thus, as one could
expect, $\mathcal{T}(f)$ is a non singular tropical hypersurface according to
Definition~\ref{def:tropical_singularity}. It is straightforward to verify that
$\frac{\partial F}{\partial L_1}= x \frac{\partial F}{\partial x}=
a_{(1,0)}x+ a_{(1,1)}xy + 2 a_{(2,0)} x^2$,
$\frac{\partial F}{\partial L_2}= y \frac{\partial F}{\partial y}= a_{(0,1)}y+
a_{(1,1)}xy + 2 a_{(0,2)} y^2$, and
$\frac{\partial f}{\partial L_1}= 0 \odot w_1 \oplus 0 \odot w_1\odot w_2
\oplus 1 \odot w_1^2$, 
$\frac{\partial f}{\partial L_2}= 0 \odot w_2 \oplus 0 \odot w_1\odot w_2
\oplus 1 \odot w_2^2$, correspond to the standard partial derivatives.
Note that $q=(0,0) \in \mathcal{T}(f)$ is non singular but it also lies in the
intersection of the tropical hypersurfaces $\mathcal{T}(\frac{\partial
f}{\partial L_1})$ and $\mathcal{T}(\frac{\partial f}{\partial L_2})$.
Consider now the affine form $L_3 = w_1 -1$. Then, $\frac{\partial f}{\partial
L_3}= 1 \oplus 0 \odot w_2  \oplus 1 \odot w_1^2 \oplus 1 \odot w_2^2$ and $q =
(0,0)$ does not lie in the associated tropical hypersurface ${\mathcal
T}(\frac{\partial f}{\partial L_3})$ since the minimum of the linear forms
associated to the $4$ terms is attained only once at $q$.
\end{example}

The main result in this section is
Theorem~\ref{teo:singular_point_by_derivatives}, which characterizes singular
tropical hypersurfaces (with a given support) in terms of tropical Euler
derivatives. As we saw in Example~\ref{ex:nsconic}, it is not enough to consider
the $d$ Euler derivatives corresponding to the coordinate axes. It is not
difficult to solve this problem by appealing to the notion of a tropical basis
\cite{Computing_trop_var}, which we now recall.

\begin{definition}\label{def:tropical_basis}
Let $I\in \mathbb{K}[x_1^{\pm 1},\ldots, x_d^{\pm 1}]$ be an ideal. Then,
$Trop(I)$ consists of all those weights $w \in \R^d$ which satisfy the
following: $w \in \mathcal{T}(Trop(F))$ for every nonzero $F \in I$.
By~\cite{kapranov,stubook}, $Trop(I)$ coincides with $Trop(V(I))$, that is with
(the closure of) the image under the valuation map of the zeros $V_{\K^*}(I)$ of
$I$ in the torus $(\K^*)^d$. A tropical basis of $I$ is a finite set of
polynomials $F_1,\ldots, F_r$ generating $I$ such that $Trop(I) = \cap_{i=1}^r
\mathcal{T}(Trop(F_i))$.
\end{definition}

Given a finite lattice set $A\subseteq \mathbb{Z}^d$ with $n$ elements, we will
identify in what follows the space of polynomials with coefficients in
$\mathbb{K}$ and support $A$ with $(\mathbb{K}^*)^n$. Denote by $\overline{1}$
the point $(1,\ldots, 1)\in \mathbb{(K^*)}^d$. The subvariety \[H_{1}=\{F\in
(\mathbb{K}^*)^n| F \textrm{\ is\ singular\ at\ } \overline{1}\}\] of
polynomials with support $A$ and a singularity at $\overline{1}$ is a linear
space. Its closure in $\P^{n-1}(K)$ equals the dual space to the tangent space
at the point $\overline{1}$ of $X_A$. See the discussion of this space and the
following results in Section~\ref{sec:Bergman}. 

Denote by ${\mathcal L}$ the set of integer affine functions $L=j_1w_1+\ldots
+j_d w_d+\b$ such that  $\gcd(j_1,\ldots, j_d)=1$ and $ \dim \langle  \{L=0\}
\cap A\rangle = d-1$.

\begin{proposition}\label{prop:tbH1}

Let $(v_1, \dots, v_n)$ be variables.  The finite set of tropical linear
polynomials
\[P_1:=\left\{\bigoplus_{i\in A-\{L=0\}} 0 \odot v_i \, | \, L \in {\mathcal L} 
\right\}\]
is a tropical basis of $Trop(H_1)$.
\end{proposition}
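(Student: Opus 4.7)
The plan is to identify the elements of $\mathcal{L}$ (up to sign) with the circuits of the matroid $M$ associated to the linear subspace $H_1 \subseteq \K^A$, and then invoke the standard fact that for a linear ideal the circuit polynomials form a tropical basis.

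First I would note that, as observed after Definition~\ref{def:euler_derivative}, $F = \sum_{i \in A} a_i x^i$ is singular at $\overline{1}$ iff every Euler derivative $\frac{\partial F}{\partial L}(\overline{1}) = \sum_{i \in A} L(i) a_i$ vanishes as $L$ ranges over integral affine functions. Hence $I(H_1) \subseteq \K[v_i : i \in A]$ is generated in degree one by the linear forms $\ell_L := \sum_{i \in A} L(i) v_i$. Since each $L(i)$ is an integer and the residue field of $\K$ has characteristic zero, $val(L(i)) = 0$ whenever $L(i) \neq 0$, so $Trop(\ell_L) = \bigoplus_{i \in A - \{L=0\}} 0 \odot v_i$. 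In particular every polynomial in $P_1$ is the tropicalization of a specific classical $\ell_L \in I(H_1)$, and the inclusion $Trop(H_1) \subseteq \bigcap_{L \in \mathcal{L}} \mathcal{T}(Trop(\ell_L))$ is automatic.

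Next I would show that $\mathcal{L}$ parametrizes, modulo sign, the circuits of $M$, where a circuit is a minimal set of the form $A \cap \{L \neq 0\}$. If $\dim \langle A \cap \{L=0\} \rangle \leq d-2$, one can pass an integral affine hyperplane through $A \cap \{L=0\}$ together with any point of $A$ off $\{L=0\}$ (such a point exists because $A$ affinely spans $\R^d$), producing $L'$ with $A \cap \{L'=0\} \supsetneq A \cap \{L=0\}$ and hence a strictly smaller support, contradicting minimality. Conversely, when $\dim \langle A \cap \{L=0\} \rangle = d-1$ the hyperplane $\{L=0\}$ is uniquely determined by its trace on $A$, so no such $L'$ exists and $S = A \cap \{L \neq 0\}$ is minimal. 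The normalization $\gcd(j_1, \ldots, j_d)=1$ selects a unique integral form per hyperplane, up to the sign ambiguity (which gives the same support and the same element of $P_1$).

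Finally I would invoke the classical result that for a linear ideal in $\K[v_1,\ldots,v_n]$ the linear forms supported on circuits of the associated matroid form a tropical basis, see e.g.~\cite{Computing_trop_var}. Combined with the identification in step two, this yields $Trop(H_1) = \bigcap_{L \in \mathcal{L}} \mathcal{T}(Trop(\ell_L))$, showing that $P_1$ is a tropical basis of $Trop(H_1)$. The main obstacle lies in this last, classical step: a direct proof requires a matroid-theoretic lifting argument, constructing $a \in H_1 \cap (\K^*)^A$ with $val(a_i) = w_i$ from the tropical circuit conditions by fixing a basis of $M$, lifting its coordinates to chosen elements of $\K^*$, and propagating consistent choices to the remaining coordinates via the fundamental circuits.
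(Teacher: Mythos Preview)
Your proposal is correct and follows essentially the same route as the paper: identify the degree-one part of $I(H_1)$ with the forms $\ell_L = \sum_{i\in A} L(i)\,v_i$, observe that their tropicalizations are the polynomials in $P_1$, and then invoke the cited result from \cite{Computing_trop_var} that the minimal-support linear forms (circuits) of a linear ideal form a tropical basis, together with the identification of these minimal supports with the $L\in\mathcal{L}$. You spell out the circuit identification more carefully than the paper does and append an unnecessary sketch of the lifting argument behind the black-box result, but the structure of the argument is the same.
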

\begin{proof}
Let $F=\sum_{i\in A} y_i x^i\in \mathbb{K}[x_1^{\pm 1},\ldots, x_d^{\pm 1}, y_i
(i\in A)]$ be the generic polynomial with support $A$. Note that  as $L$ runs
over all integer affine functions, the Euler derivatives  $\frac{\partial
F}{\partial L}$ are precisely {\em all} integral linear combination of $F, x_1
\frac{\partial F}{\partial x_1}, \dots, x_d \frac{\partial F}{\partial x_d}$.
$H_1$ is the linear space (in the variables $(y_1, \dots, y_n)$) defined by the
linear equations $F(x=\overline{1})$, $x_j \frac{\partial F}{\partial
x_j}(x=\overline{1})=0$, $1\leq j\leq d$. We know that the linear forms
vanishing on $H_1$  form a tropical basis of $H_1$ \cite{stubook}, and it is
enough to consider linear forms with rational (and a fortiori, integer) entries.
Now,  $Trop(\frac{\partial F}{\partial L}(x=\overline{1})) =  \bigoplus_{i\in
A-\{L=0\}} 0 \odot v_i$. Moreover, by \cite{Computing_trop_var}, the set of
linear forms in $H_1$ that have minimal support define a tropical basis of
$Trop(H_1)$. This set corresponds to the affine functions $L$ such that
$\{L=0\}\cap A$ spans an affine space of maximal dimension $d-1$.
\end{proof}

We have defined a tropical basis of the set of polynomials with a singularity at
$\overline{1}$. If we have another point $a\in (\mathbb{K}^*)^d$, we can easily
provide a tropical basis of the variety $H_a$ of hypersurfaces with a singular
point at $a$ by considering a diagonal change of coordinates. Explicitly, if
$F=\sum_{i\in A} a_i x^i$ is a Laurent polynomial with coefficients in $(\mathbb
K)^*$ with a singularity at $\overline{1}$ and such that $val(a_i)=p_i\odot
q_1^{i_1}\odot \ldots \odot q_d^{i_d}= p_i + \langle  i, q \rangle$, then the
polynomial $F_1=\sum_{i\in A} a_it^{-q_1i_1-\ldots -q_di_d} x^i$ has a
singularity at $(t^{q_1},\ldots,t^{q_d})$ and $val(a_it^{-q_1i_1-\ldots
-q_di_d})=p_i$.  We can easily deduce the following.

\begin{proposition}\label{prop:tbH}
Let $A\subseteq \mathbb{Z}^d$ with $\Z$-linear span $\mathbb{Z}^d$. As before,
identify $(\mathbb{K}^*)^n$ with the space of polynomials with support $A$.
Consider the incidence variety $H=\{(F,u)\in (\mathbb{K}^*)^n\times
(\mathbb{K}^*)^d| F {\rm\ is\ singular\ at\ }u\}$. Let $F=\sum_{i\in A} y_ix^i$
be the generic polynomial with support $A$, where $(x_1, \dots, x_d)$ and
$(y_i)_{i \in A}$ are variables. Then the finite set
\[P'=\{Trop(\frac{\partial F}{\partial L}) \text{ with } \langle \{L=0\}\cap
A\rangle \text{ of maximal dimension } d-1\},\]
is a tropical basis of $Trop(H)$.
\end{proposition}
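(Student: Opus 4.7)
The plan is to reduce this statement to Proposition~\ref{prop:tbH1} by the monomial change of coordinates described just before the proposition. Consider the automorphism $\Psi$ of $(\mathbb{K}^*)^n \times (\mathbb{K}^*)^d$ that sends a pair $(F, u)$, with $F = \sum_{i\in A} a_i x^i$, to $(F\circ \sigma_u, u)$, where $\sigma_u(x) = (u_1 x_1, \ldots, u_d x_d)$. A direct check shows that $F\circ \sigma_u$ has coefficients $a_i u^i$ and that $F$ is singular at $u$ if and only if $F\circ \sigma_u$ is singular at $\overline{1}$. Therefore $\Psi$ identifies $H$ with $H_1 \times (\mathbb{K}^*)^d$.

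Tropicalization commutes with this monomial automorphism: on valuations $(p, q) \in \R^n \times \R^d$, the map $\Psi$ becomes the linear bijection $(p, q) \mapsto (p + Aq,\, q)$, where $Aq := (\langle i, q\rangle)_{i\in A}$. Hence $(p, q) \in Trop(H)$ if and only if $p + Aq \in Trop(H_1)$. By Proposition~\ref{prop:tbH1}, this is equivalent to the requirement that, for every $L\in \mathcal{L}$, the minimum of the numbers $p_i + \langle i, q\rangle$ taken over $i \in A$ with $L(i)\neq 0$ is attained at least twice. Since the generic Euler derivative is $\frac{\partial F}{\partial L} = \sum_{L(i)\neq 0} L(i)\, y_i x^i$ and the residue characteristic is zero, Lemma~\ref{lem:LFf} (applied with the coefficient variables $y_i$) gives $Trop(\frac{\partial F}{\partial L}) = \bigoplus_{L(i)\neq 0} v_i\odot w^i$; evaluating this tropical polynomial at $(p, q)$ recovers precisely that minimum. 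The set-theoretic identity $\bigcap_{L\in\mathcal{L}} \mathcal{T}(Trop(\frac{\partial F}{\partial L})) = Trop(H)$ follows.

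Finally, I would verify that the classical polynomials $\{\frac{\partial F}{\partial L} : L \in \mathcal{L}\}$ underlying $P'$ generate the defining ideal $I(H) \subset \mathbb{K}[x^{\pm1}, y^{\pm 1}]$, as required by Definition~\ref{def:tropical_basis}. This ideal is generated by $F$ together with $x_1\frac{\partial F}{\partial x_1}, \ldots, x_d\frac{\partial F}{\partial x_d}$, and each $\frac{\partial F}{\partial L}$ equals $j_1\Theta_1 F + \cdots + j_d\Theta_d F + \beta F$. Because $A$ spans $\Z^d$ as a lattice, one can exhibit enough $L\in \mathcal{L}$ (i.e., with $\dim\langle \{L=0\}\cap A\rangle = d-1$) so that the associated tuples $(j_1,\ldots, j_d, \beta)$ $\mathbb{Q}$-span $\mathbb{Q}^{d+1}$, thereby recovering $F$ and each $\Theta_j F$ as $\mathbb{K}$-linear combinations of elements of this set. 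The main obstacle is this last combinatorial verification; the tropical set-theoretic content of the proposition follows cleanly from the monomial change of variables and the reduction to Proposition~\ref{prop:tbH1}.
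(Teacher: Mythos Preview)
Your proposal is correct and follows the same route as the paper: the diagonal (monomial) change of coordinates $\Psi$ identifying $H$ with $H_1\times(\mathbb{K}^*)^d$, followed by the appeal to Proposition~\ref{prop:tbH1}, is exactly what the paper sketches in the paragraph preceding the proposition (the paper simply says ``We can easily deduce the following'' and gives no further argument).

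You are in fact more careful than the paper on one point: you flag the ideal-generation requirement in Definition~\ref{def:tropical_basis}, which the paper does not address explicitly. The ``obstacle'' you identify is easily dispatched. Since $A$ affinely spans $\R^d$, choose affinely independent points $i_0,\dots,i_d\in A$; for each $k$ let $L_k$ be an integer affine form vanishing on $\{i_0,\dots,i_d\}\setminus\{i_k\}$. Then $\dim\langle\{L_k=0\}\cap A\rangle=d-1$, and the $(d+1)\times(d+1)$ matrix $\bigl(L_k(i_m)\bigr)_{k,m}$ is diagonal with nonzero entries, so the coefficient vectors $(\beta_k,j_{k,1},\dots,j_{k,d})=(\beta_k,j_k)\cdot\widetilde{A}|_{\{i_0,\dots,i_d\}}$ are linearly independent and hence span $\mathbb{Q}^{d+1}$. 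This recovers $F,\Theta_1F,\dots,\Theta_dF$ as $\mathbb{K}$-combinations of the $\frac{\partial F}{\partial L_k}$, completing the verification.
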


We have now the tools to prove the following tropical characterization of
singular tropical hypersurfaces with fixed support.

\begin{theorem}\label{teo:singular_point_by_derivatives}
Let $f=\bigoplus_{i\in A} p_i \odot w^i$ be a tropical polynomial with support
$A$. Let $q\in \mathcal{T}(f)$ be a point in the hypersurface defined by $f$.
Then, $q$ is a singular point of $\mathcal{T}(f)$ if and only if $q\in
\mathcal{T}(\frac{\partial f}{\partial L})$ for all $L$. 

Thus, $f$ defines a singular tropical hypersurface if and only if \[\bigcap_{L}
\mathcal{T}(\frac{\partial f}{\partial L})\neq \emptyset.\] This intersection is
given by a finite number of Euler derivatives of $f$; for instance, we can take
only the affine linear functions $L \in {\mathcal L}$ defined before
Proposition~\ref{prop:tbH1}.
\end{theorem}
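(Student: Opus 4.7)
The plan is to translate ``$q$ is a singular point of $\mathcal{T}(f)$'' into the membership $(p,q) \in Trop(H)$, where $H$ is the incidence variety of Proposition~\ref{prop:tbH}, and then run the tropical basis from that proposition through the dictionary supplied by Lemma~\ref{lem:LFf}. The forward implication should fall out of the classical Euler identity for singular points; the converse is where the tropical basis does the real work.

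For the forward direction I would fix a lift $F=\sum_{i\in A} a_i x^i$ with $val(a_i)=p_i$ and a singular point $b$ of $V(F)$ with $val(b)=q$, as provided by Definition~\ref{def:tropical_singularity}. Since $b$ is singular for $V(F)$, the identity $L_\Theta(F)(b)=0$ holds for every integral affine function $L$, so $b$ is a zero of $\partial F/\partial L$; combining Kapranov's theorem with Lemma~\ref{lem:LFf} gives $q \in \mathcal{T}(Trop(\partial F/\partial L)) = \mathcal{T}(\partial f/\partial L)$. This works for every $L$, in particular for every $L \in \mathcal{L}$.

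For the converse, assume $q \in \mathcal{T}(\partial f/\partial L)$ for every $L \in \mathcal{L}$. Let $F = \sum_{i\in A} y_i x^i$ be the generic polynomial. Because the residue field has characteristic zero and the entries of $L$ are integers, one computes
\[
Trop\!\left(\frac{\partial F}{\partial L}\right) \,=\, \bigoplus_{i \in A,\ L(i)\neq 0} 0 \odot v_i \odot w^i,
\]
and its specialization $v_i := p_i$ is precisely $\partial f/\partial L$. Consequently $q \in \mathcal{T}(\partial f/\partial L)$ is tautologically the same as $(p,q) \in \mathcal{T}(Trop(\partial F/\partial L))$. By Proposition~\ref{prop:tbH} the family $\{Trop(\partial F/\partial L) : L \in \mathcal{L}\}$ is a tropical basis of $Trop(H)$, so the hypothesis places $(p,q)$ in $Trop(H)$. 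Kapranov's theorem then upgrades this to an actual pair $(F,b) \in H$ with $(val(F), val(b)) = (p,q)$, which is exactly the data witnessing that $q$ is singular.

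Finiteness of the intersection follows because $\partial f/\partial L$ depends only on the subset $\{i\in A : L(i)=0\} \subseteq A$, of which there are at most $2^{|A|}$ possibilities. The main obstacle in this plan is not conceptually deep but requires careful bookkeeping in the converse: the tropical basis lives in the joint coefficient-point space $\R^A \times \R^d$, and one must verify cleanly that slicing at the fixed coefficient vector $p$ recovers exactly the tropical hypersurfaces $\mathcal{T}(\partial f/\partial L)$, using in an essential way the characteristic-zero hypothesis on the residue field so that the integer factors $L(i)$ contribute no spurious valuations.
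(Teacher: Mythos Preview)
Your proposal is correct and follows essentially the same route as the paper: the forward implication via the classical Euler identity $L_\Theta(F)(b)=0$ at a singular point, and the converse by recognizing that $q\in\mathcal{T}(\partial f/\partial L)$ for all $L$ means exactly $(p,q)\in\mathcal{T}(Trop(\partial F/\partial L))$ for all $L$, invoking the tropical basis of Proposition~\ref{prop:tbH} to place $(p,q)$ in $Trop(H)$, and lifting via Kapranov. Two cosmetic remarks: in the forward direction you do not need the full strength of Kapranov's theorem (the inclusion $val(V(G))\subseteq\mathcal{T}(Trop(G))$ is the trivial direction), and the ``slicing'' concern you flag is indeed handled by the tautology you already wrote down, so there is no hidden obstacle there.
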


\begin{proof}
One implication is trivial. If $q$ is a singular point of $\mathcal{T}(f)$
there exists a polynomial $F=\sum_{i\in A} a_i x^i$, $val(a_i)=p_i$ with a
singularity at a point $b$ with $val(b)=q$. Then, $\frac{\partial F}{\partial
L}(b)=0$ for all $L$, and so $val(b)=q\in \mathcal{T}(\frac{\partial f}{\partial
L})$ for all $L$.

For the converse, let $q$ be a point in $\bigcap_{L} \mathcal{T}(\frac{\partial
f}{\partial L})$. In particular, $q\in \mathcal{T}(f)$. Then, 
for any integer affine function $L$, the minimum
$\min_{i\in A, L(i) \not=0}\{p_i + \langle q,i \rangle \}$
is attained at least twice. This happens if and only if for all $L$ 
the point $(p, q) \in \mathcal{T}( \bigoplus_{i \in A-\{L=0\}} v_i \odot w^i)$.
It follows from Proposition~\ref{prop:tbH} that $(p,q)$ belongs to the incidence
variety $Trop(H)$. So, by Kapranov's theorem~\cite{kapranov}, there is a point
$(F,b)\in V(H)$ such that $F$ is an algebraic polynomial with support in $A$ and
a singularity at $b$ such that $Trop(F)=f$ and $Trop(b)=q$.
\end{proof}

We present two examples that illustrate
Theorem~\ref{teo:singular_point_by_derivatives}.

\begin{example}\label{ex:first_example}
Let $A=\{(0,0),(1,0),(2,0),(1,1),(2,2),(0,2)\}$. Consider the tropical
polynomial $f=0\oplus 0\odot w_1\oplus 0\odot w_1^2 \oplus 0\odot w_1\odot w_2
\oplus 0\odot w_1^2\odot w_2^2 \oplus 6\odot w_2^2$. Let us compute its singular
points. Let $L_1=w_2-2$, $\frac{\partial f}{\partial L_1}=0 \oplus 0\odot w_1
\oplus 0\odot w_1^2 \oplus 0\odot w_1\odot w_2$, $L_2=w_2$, $\frac{\partial
f}{\partial L_2}=0\odot w_1\odot w_2 \oplus 0\odot w_1^2\odot w_2^2 \oplus
6\odot w_2^2$. The intersection set of these three curves is the segment $S$
whose ends are $(0,0), (3,-3)$. Consider now $L_3=w_1-w_2$ that contains all the
monomials \emph{dual} to $S$ (cf. the beginning of Section~\ref{sec:Bergman} for
a more precise explanation of this duality). Then, $\frac{\partial f}{\partial
L_3}= 0\odot w_1\oplus 0\odot w_1^2 \oplus 6\odot w_2^2$. The intersection of
this tropical curve with the segment $S$ is the set of points $\{(0,0),
(2,-2)\}$ (See Figure~\ref{fig:derivative1}). Let us check that these two points
are valid singular points. The polynomial
$F_1:=-1+4x+(-2+t^6)x^2+(-2-2t^6)xy+x^2y^2+t^6y^2$ has support $A$. It defines a
curve with a singularity at $(1,1)$ and $Trop(F_1)=f$. On the other side,
$F_2=(1-t^2+t^4)+(2-2t^2)x+x^2+(-2-2t^2)xy+x^2y^2+t^6y^2$ is a polynomial with
support $A$. $F_2$ defines a curve with a singularity at $(t^2,t^{-2})$ and such
that also $Trop(F_2)=f$. It is not difficult to see that it is not possible to
find a single polynomial $F$ with two singular points with valuations $(0,0)$
and $(2,-2)$.
\end{example}

\begin{figure}
\begin{center}
\includegraphics[width=0.4\textwidth]{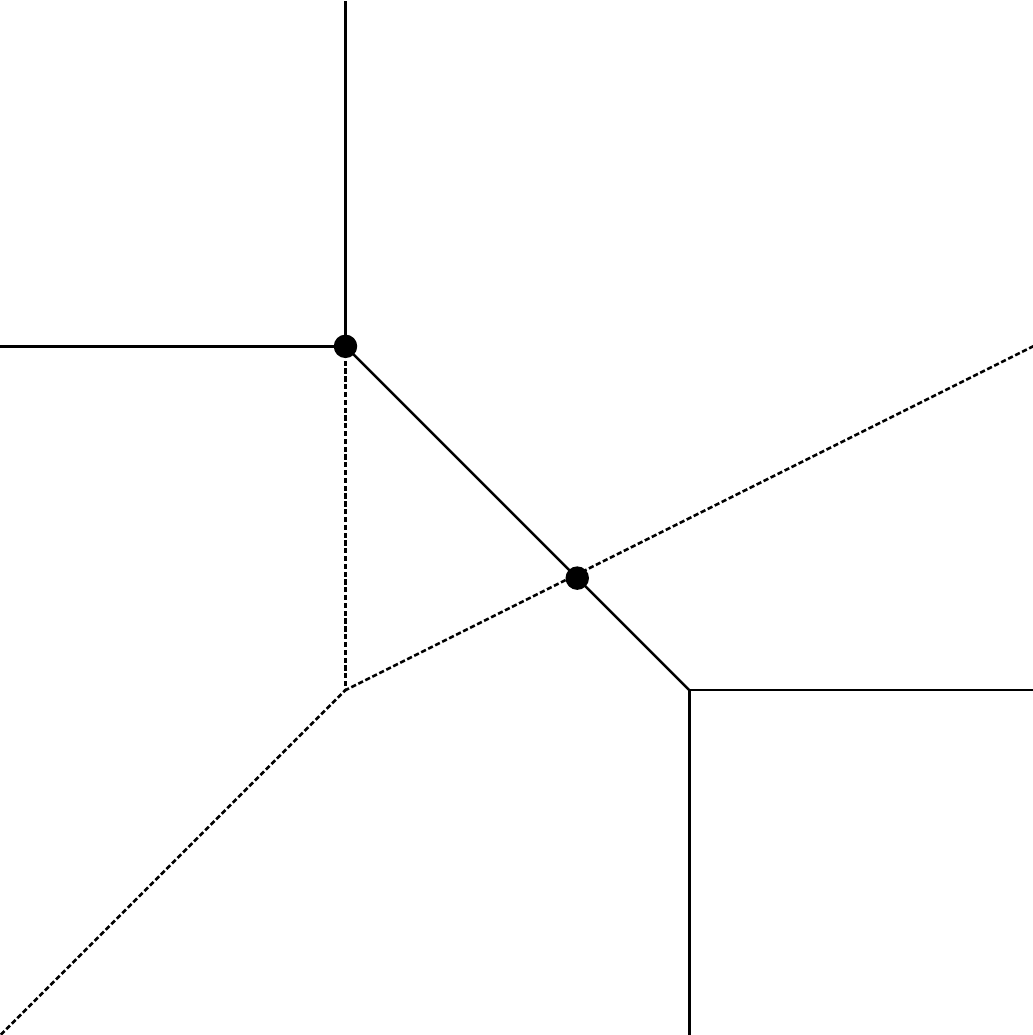}
\end{center}
\caption{The curve $f$ (bold) and $\frac{\partial f}{\partial (w_1+w_2)}$
(pointed) in Example~\ref{ex:first_example}}\label{fig:derivative1}
\end{figure}

\begin{example}\label{ex:laface}
Let $A=\{(0,0),(1,0),(2,0),(1,1),(2,2),(0,2)\}$ be as in
Example~\ref{ex:first_example} and let $A'=A \cup \{(2,10), (0,1), (1,2)\}$, so
that $A'$ are all the lattice points in the convex hull of $A$. Consider the
tropical polynomial $f'=0\oplus 0\odot w_1\oplus 0\odot w_1^2 \oplus 0\odot
w_1\odot w_2 \oplus 0\odot w_1^2\odot w_2^2 \oplus 6\odot w_2^2 \oplus 0\odot
w_1^2 w_2 \oplus 3\odot w_2 \oplus 3 \odot w_1 w_2^2$ with support in $A'$. That
is, the coefficients of the $3$ new points are given by interpolation of the
linear functions defining the subdivision associated to the polynomial $f$ in
the previous example. Note that all points in $A'$ are thus marked. It is easy
to check that all points in $\mathcal{T}(f')$ are singular. Indeed, in this case
$f'$ can be lifted to the polynomial $F = (1 + x + xy + t^3 y)^2$.
\end{example}

We now present Theorem~\ref{teo:singular_point_by_derivatives} into action in a
defective example, where the tropical $A$-discriminant can be explicitly
computed.

\begin{example}\label{ex:defective}
Let $A \subset \Z^3$ be the configuration $A = \{\alpha_1=(0,0,0),
\alpha_2=(1,0,0), \alpha_3 = (2,0,0), \alpha_4 = (0,0,1), \alpha_5 = (0,1,1),
\alpha_6 = (0,2,1)\}$. Thus, $A$ is the union of two one dimensional circuits
and the convex hull of $A$ is the lattice tetrahedron with vertices $\{\alpha_1,
\alpha_3, \alpha_4, \alpha_6\}$. Note that $A$ does not contain any circuit of
full dimension $3$. The zero set of any integer affine function $L$ such that
the affine span of $\{L=0\} \cap A$ is equal to $2$, consists of one of the
circuits plus one more point. Consider a tropical polynomial $f = \oplus_{\ell
=1}^6 p_{\alpha_i} \odot w^{\alpha_i}$ with support in $A$. Thus, there exists a
singular point $q \in \mathcal{T}(f)$ if and only if 
\begin{equation}\label{eq:pes}
2 p_{\alpha_2} = p_{\alpha_1} + p_{\alpha_3}, \quad 2 p_{\alpha_5} =
p_{\alpha_4} + p_{\alpha_6}.
\end{equation} 
This corresponds to the fact that this configuration is self-dual; indeed, the
dual variety $X_A^* \subset \P^5(\mathbb{K})$ has (projective) dimension $3$, it
is isomorphic to the toric variety $X_A$ and it is cut out by the binomials
$y_2^2 - 4 y_1 y_3 =0$, $y_5^2 = 4 y_4 y_6$, where $(y_1 : \dots : y_6)$ are
homogeneous coordinates in $\P^5(K)$. The tropicalization of this binomial ideal
is the rowspan of the associated matrix $A$ in $\R^6$, which is defined by the
equations~\eqref{eq:pes}.
\end{example}

\section{Marked tropical hypersurfaces and tropical
singularities}\label{sec:marked}

Given a tropical polynomial $f = \oplus_{i \in A} p_i \odot w^i$ with support
$A$, most of the (finite) Euler derivatives $\frac{\partial f}{\partial L}$ do
not provide relevant information to detect singular points of $\mathcal{T}(f)$.
In this section we give further conditions and characterizations to detect
singular points.

We need to recall the following duality \cite{GKZ-book}. The vector of
coefficients $p= (p_i)_{i \in A}$ of $f$ defines a \emph{coherent marked
subdivision} $\Pi_p$ of the convex hull $N(A)$ of $A$. That is, $p$ defines a
collection of subsets of $A$ (called \emph{marked cells}) which are in
one-to-one correspondence with the domains of linearity of the affine function
cutting the faces of the lower convex hull of the set of lifted points $\{(i,
p_i), i \in A\}$ in $\R^{d+1}$. Assume that a lower face $\Gamma_\varphi$ equals
the graph of an affine function $\varphi(w_1,\dots, w_n) = \langle  q_\varphi, w
\rangle + \beta_\varphi$. The corresponding marked cell $\sigma_\varphi$ of the
subdivision of $N(A)$ is the subset of $A$ of all those $i$ for which $p_i =
\varphi(i)$.

The marked subdivision $\Pi_p$ is combinatorially dual to the \emph{marked}
tropical variety $\mathcal{T}(f)$. As we saw, this is a polyhedral complex which
is a union of dual cells $\sigma_\varphi^*$, where we also record the
information of the dual cell $\sigma_\varphi$, and not only of the geometric
information of the vertices of $\sigma_\varphi$. More explicitly, the dual cell
$\sigma^*$ in $\mathcal{T}(f)$ of a given cell $\sigma$ of $\Pi_p$ equals the
closure of the union of the points $q_\varphi$ for all ways of writing $\sigma =
\sigma_\varphi$, and we also record the information of all the points in
$\sigma$, that is, of {\em all the points at which the minimum $f(q_\varphi)$ is
attained} for any point $q_\varphi$ in the relative interior of $\sigma^*$
(which is the \emph{marking} of the cell). The sum of the dimensions of a pair
of dual cells is $d$. In particular, vertices of $\mathcal{T}(f)$ correspond to
marked cells of $\Pi_p$ of maximal dimension $d$.

We now prove that $\Pi_p$ is a (coherent) triangulation, then the tropical
hypersurface associated to $f$ is non singular, as expected. As we will see, the
converse to this statement is not true and involves a complicated combinatorial
study. Recall that a point configuration is a pyramid, if all but one of its
points lie in an affine hyperplane.

\begin{lemma}\label{lem:punto_regular}
Let $q \in {\mathcal T}(f)$ lying in the relative interior of a cell $\sigma^*$
such that the dual cell $\sigma$ in $\Pi_p$ is a pyramid. Then, $q$ is non
singular. In particular, if $\Pi_p$ is a coherent triangulation, then the
tropical hypersurface ${\mathcal T}(f)$ is non singular.
\end{lemma}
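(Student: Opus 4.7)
The plan is to invoke Theorem~\ref{teo:singular_point_by_derivatives}: to show $q$ is non-singular it suffices to produce a single integer affine function $L$ such that $q \notin \mathcal{T}(\partial f/\partial L)$. Since the hypothesis gives us a distinguished apex, the natural candidate is an $L$ cut out by a hyperplane through the non-apex vertices of $\sigma$.

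More precisely, because $\sigma$ is a pyramid there exist an apex $i_0 \in \sigma$ and an affine hyperplane $H \subset \R^d$ with $\sigma \setminus \{i_0\} \subset H$ and $i_0 \notin H$. I would first argue that $H$ can be chosen \emph{rational}: the affine span $\langle \sigma \setminus \{i_0\} \rangle$ is already rational (the points of $A$ lie in $\Z^d$) and of dimension at most $d-1$, and the existence of some $H$ that separates $i_0$ from it shows $i_0 \notin \langle \sigma \setminus \{i_0\} \rangle$. Working in the quotient $\R^d / \langle \sigma \setminus \{i_0\} \rangle$, pick any rational hyperplane through the origin not containing the (nonzero, rational) image of $i_0$ and pull it back. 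Clearing denominators gives an integer affine function $L$ with $L(i) = 0$ for all $i \in \sigma \setminus \{i_0\}$ and $L(i_0) \neq 0$.

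Next I verify that $q \notin \mathcal{T}(\partial f/\partial L)$. Since $q$ lies in the relative interior of $\sigma^*$, the minimum $m = f(q)$ of the affine forms $p_i + \langle q, i \rangle$ is attained \emph{exactly} on the marked set $\sigma$. By Definition~\ref{def:trop_derivative}, forming $\partial f/\partial L$ removes precisely those monomials with $L(i) = 0$; by construction this eliminates $\sigma \setminus \{i_0\}$ but retains $i_0$. Among the surviving terms, $i_0$ still achieves value $m$, while every other surviving $i \in A \setminus \sigma$ gives a strictly larger value. Hence the tropical minimum of $\partial f/\partial L$ at $q$ is attained only once (at $i_0$), so $q \notin \mathcal{T}(\partial f/\partial L)$, and Theorem~\ref{teo:singular_point_by_derivatives} implies $q$ is non-singular.

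For the second statement, when $\Pi_p$ is a coherent triangulation every cell $\sigma$ is a simplex, and any simplex is a pyramid with respect to each of its vertices: pick any vertex as apex, and extend the affine hull of the remaining vertices to a hyperplane avoiding it. Thus the hypothesis of the first part is satisfied at every $q \in \mathcal{T}(f)$, giving non-singularity of the whole hypersurface. The only subtle step is the rationality argument for $H$, required because the theorem uses \emph{integer} affine $L$; once that is in hand, the rest is a direct combinatorial check.
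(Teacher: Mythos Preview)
Your proof is correct and follows essentially the same approach as the paper: choose an integer affine $L$ whose zero hyperplane contains $\sigma\setminus\{i_0\}$ but not the apex $i_0$, and observe that the minimum of $\partial f/\partial L$ at $q$ is then attained only at $i_0$. You give more detail than the paper on the rationality of $L$ (which the paper simply asserts), but the argument is the same.
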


\begin{proof}
If $\sigma$ is a pyramid, let $L$ be a linear functional such that $\{L=0\}$
intersects $\sigma$ in a facet and leaves out one point. This means that the
minimum of $\frac{\partial f}{\partial L}$ at $q$ is attained at exactly one
monomial. Hence, $q\notin \mathcal{T}(\frac{\partial f}{\partial L})$ and $q$ is
not a singular point.
\end{proof}

\begin{example}[Example~\ref{ex:nsconic}, continued]\label{ex:triang2}
Consider the same configuration $A$ of Example~\ref{ex:nsconic}, that is, the
six lattice points of the $2$-simplex in the plane and the tropical conic $g = 0
\oplus 1 \odot w_1 \oplus 1 \odot w_2 \oplus 1 \odot w_1\odot w_2 \oplus 0 \odot
w_1^2 \oplus 0 \odot w_2^2$. Then the associated marked subdivision has only one
cell $\sigma= \{ (0,0), (2,0), (0,2)\}$ and it is not singular by
Lemma~\ref{lem:punto_regular}, even if there are points in $A$ that do not occur
in the subdivision and this single cell has lattice volume bigger than one.
Also, it is straightforward to check that any polynomial $G =a_{(0,0)}+
a_{(1,0)} x + a_{(0,1)} y +$ $a_{(1,1)} xy + a_{(2,0)} x^2+ a_{(0,2)} y^2 \in
\K[x,y]$ with the given valuations is non singular, because this time the only
term in the expansion of the determinant of the matrix \eqref{eq:matrix} with
smallest valuation $0$, is the diagonal term $8 a_{(2,0)} a_{(0,2)} a_{(0,0)}$.
\end{example}

\begin{example}\label{ex:nontriang2} 
Consider the tropical polynomial $f=0\oplus 0 \odot w_1\oplus 0\odot w_2 \oplus
1 \odot w_3 \oplus 0\odot w_1\odot w_3 \oplus 0\odot w_2 \odot w_3$. We read the
support $A$ from $f$. The coefficients of $f$ induce the coherent marked
subdivision depicted in Figure~\ref{fig:cayley2rectas}, which has two cells of
dimension $3$ and it is not a triangulation.
\begin{figure}
\begin{center}
\includegraphics[width=0.4\textwidth]{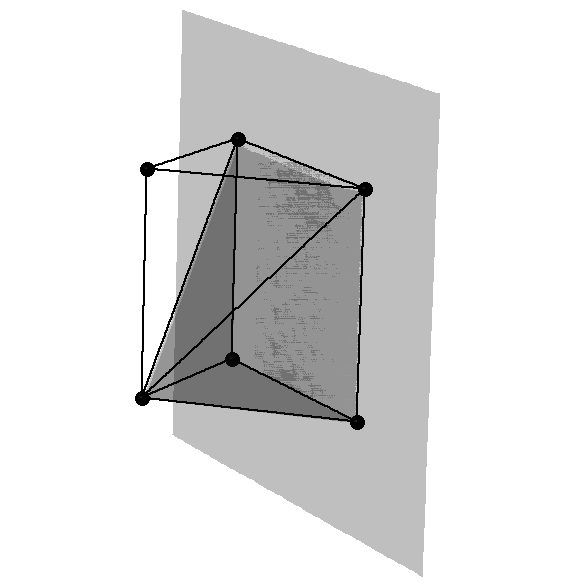}
\end{center}
\caption{The subdivision of
Example~\ref{ex:nontriang2}}\label{fig:cayley2rectas}
\end{figure}
One of these top dimensional cells is a unimodular 3-simplex. The second top
dimensional cell contains a circuit $Z= \{(1,0,0), (0,1,0), (1,0,1), (0,1,1)\}$
of dimension $d-1 =2$ and it is a pyramid over the point $(0,0,0)$. The affine
integer function $L = 1- w_1 -w_2$ verifies that $Z \subset \{L=0\}$. Computing
$\frac{\partial f}{\partial L}$, we check that $f$ defines a \emph{non singular}
tropical surface.
\end{example}

We now analyze some further conditions that a point $q\in \mathcal{T}(f)$ must
satisfy in order to be a tropical singular point.

\begin{theorem}\label{teo:char_punto_singular}
Let $f=\bigoplus_{i\in A} p_i \odot w^i$ be a tropical polynomial and $q\in
\mathcal{T}(f)$ lying in the interior of  a cell $\sigma^*$. Then, $q$ is a
singular point if and only if the dual cell $\sigma$ is not a pyramid and we
have that $q\in \mathcal{T}(\frac{\partial f}{\partial L})$ for all affine
linear functions $L$ such that $\dim \langle  \{L=0\}\cap {A}\rangle = d-1$ and
$\sigma \subseteq \{L=0\}$. So, in the particular case of a vertex $q$ of
$\mathcal{T}(f)$,
$q$ is singular if and only if $\sigma$ is not a pyramid.
\end{theorem}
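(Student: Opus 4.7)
My plan is to reduce the statement to Theorem~\ref{teo:singular_point_by_derivatives} by pinpointing exactly which Euler derivatives in the (finite) tropical basis can fail to vanish at $q$, and arguing that these are precisely the ones whose hyperplane $\{L=0\}$ contains the dual cell $\sigma$—unless $\sigma$ is a pyramid, in which case some derivative detects non-singularity directly.

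For the forward direction, if $q$ is a singular point then Theorem~\ref{teo:singular_point_by_derivatives} gives $q\in\mathcal{T}(\partial f/\partial L)$ for every integer affine function $L$, which in particular yields the second condition of the statement. Moreover $\sigma$ cannot be a pyramid, for otherwise Lemma~\ref{lem:punto_regular} would contradict the singularity of $q$.

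For the converse, assume $\sigma$ is not a pyramid and $q\in\mathcal{T}(\partial f/\partial L)$ for every $L\in\mathcal{L}$ with $\sigma\subseteq\{L=0\}$. By the last assertion of Theorem~\ref{teo:singular_point_by_derivatives}, it suffices to prove $q\in\mathcal{T}(\partial f/\partial L)$ for every $L\in\mathcal{L}$. The hypothesis covers the case $\sigma\subseteq\{L=0\}$, so fix $L\in\mathcal{L}$ with $\sigma\not\subseteq\{L=0\}$. The key observation is that $\sigma$ being a pyramid is equivalent to the existence of an affine hyperplane $H$ with $|\sigma\setminus H|=1$; since $\sigma$ is not a pyramid and $\sigma\not\subseteq\{L=0\}$, we deduce $|\sigma\setminus\{L=0\}|\geq 2$. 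Every $i\in\sigma$ attains the minimum defining $f(q)$, and each $i\in\sigma\setminus\{L=0\}$ survives as a monomial of $\partial f/\partial L$. Thus at least two terms of $\partial f/\partial L$ realize the minimum at $q$, giving $q\in\mathcal{T}(\partial f/\partial L)$, as required.

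Finally, for a vertex $q$ of $\mathcal{T}(f)$, the cell $\sigma^*$ is a point, so by the duality recalled at the start of Section~\ref{sec:marked} one has $\dim\sigma=d$, and no affine hyperplane contains $\sigma$. Hence no $L\in\mathcal{L}$ satisfies $\sigma\subseteq\{L=0\}$, the second condition of the theorem is vacuous, and the criterion collapses to ``$\sigma$ is not a pyramid''. The only genuinely combinatorial point in the whole argument is the elementary characterization of non-pyramids by the condition $|\sigma\setminus H|\neq 1$ for every hyperplane $H$; granted this, the proof is a clean bookkeeping on the tropical basis provided by Proposition~\ref{prop:tbH} and Theorem~\ref{teo:singular_point_by_derivatives}, and I do not anticipate any serious obstacle.
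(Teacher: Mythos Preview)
Your proof is correct and follows essentially the same approach as the paper. The only difference is organizational: the paper argues the converse by contrapositive (starting from a witness $L'$ with $q\notin\mathcal{T}(\partial f/\partial L')$ and deducing that either $\sigma$ is a pyramid or some $L\in\mathcal{L}$ with $\sigma\subseteq\{L=0\}$ fails), whereas you argue directly by running over $L\in\mathcal{L}$ and using the non-pyramid hypothesis to handle those with $\sigma\not\subseteq\{L=0\}$; the core observation that $|\sigma\setminus\{L=0\}|\geq 2$ in that case is identical.
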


\begin{proof}
If $q$ is a singular point, $\sigma$ is not a pyramid by
Lemma~\ref{lem:punto_regular}. As $q\in \mathcal{T}(\frac{\partial f}{\partial
L})$ for any $L$ by definition, in particular this happens for those $L$ of the
form described in the hypotheses. Suppose now that $q$ is not a singular point
and let $L'$ be an affine integer function such that $q\notin
\mathcal{T}(\frac{\partial f}{\partial L'})$. Let $i \in A - \{L'=0\}$ be the
unique point of $A$ at which $\mathcal{T}(\frac{\partial f}{\partial L'})(q)$ is
attained. Then, if $\sigma$ is not contained in  $\{L'=0\}$,  we have that $i
\in \sigma$ and it is the unique point of $\sigma$ outside $\{L'=0\}$, and so
$\sigma$ is a pyramid. Otherwise, take any integer hyperplane $\{L=0\}$ such
that $A\cap \{L'=0\}\subseteq A\cap \{L=0\}$, $A\cap \{L=0\}$ spans an affine
space of dimension $d-1$ and $i\notin L$. For any such $L$ we have that $q\notin
\mathcal{T}(\frac{\partial f}{\partial L})$, as wanted.
\end{proof}

As a consequence, we can easily describe the polynomials that define singular
hypersurfaces in the case of $1$ and $2$ variables. Recall that, if $A$ is not
defective, then ${\mathcal T}(Trop(\Delta(A)))$ is a subfan of the secondary fan
of $A$. In the simplest case of one variable, $A\subseteq \mathbb{Z}$, it holds
that ${\mathcal T} (Trop(\Delta_A))$ equals the union of the non top dimensional
cones in the secondary fan (since the only proper faces of $A$ are vertices). 
Hence, a univariate polynomial is singular if and only if the induced marked
subdivision is not a triangulation. With our notation, this is a simple case of
Theorem~\ref{teo:char_punto_singular}, because all circuits of $A$ are of
maximal dimension $1$. 

The following result, in the smooth case, appears in \cite[Prop. 3.9, Ch.
11]{GKZ-book}. 

\begin{corollary}\label{teo:singular_dim_1_and_2}
Let $A\subseteq \mathbb{Z}^2$ with $n$ elements. Suppose $p\in \R^n$ induces a
coherent marked subdivision $\Pi_p$ in $A$ that is not a triangulation. Then $p$
is in ${\mathcal T}(Trop(\Delta_A))$ (equivalently, the polynomial $f =
\oplus_{i\in A} p_i \odot w^i$ defines a singular tropical hypersurface) in
exactly the following situations:
\begin{itemize}
\item[i)] 
There exists a marked cell of $\Pi_p$ which contains a circuit of dimension $2$.
\item[ii)] \label{case:ii} All circuits contained in a cell of $\Pi_p$ have
affine dimension $1$ and there exists a marked cell $\sigma$ of $\Pi_p$ of
dimension $1$ and cardinality $|\sigma| \ge 3$ with the following property: Let
$L$ be an integer affine function such that $\sigma \subset\{L=0\}$. Then,
$\sigma^* \cap \mathcal{T}(\frac{\partial f}{\partial L}) \not= \emptyset$.
\end{itemize}
\end{corollary}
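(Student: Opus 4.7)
The plan is to apply Theorem~\ref{teo:char_punto_singular} according to the dimension of the dual cell $\sigma$ of a candidate singular point $q\in\mathcal{T}(f)$. In dimension $d=2$ the relation $\dim\sigma+\dim\sigma^{*}=2$ leaves only $\dim\sigma\in\{1,2\}$, since $\dim\sigma=0$ corresponds to a connected component of $\R^{2}\setminus\mathcal{T}(f)$. I will treat these two cases separately and show that each one matches, respectively, one of the alternatives (i) or (ii) in the statement.

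For $\dim\sigma=2$ the point $q$ is a vertex of $\mathcal{T}(f)$, and Theorem~\ref{teo:char_punto_singular} tells us that $q$ is singular iff $\sigma$ is not a pyramid. I would then prove the lattice-geometric equivalence that a $2$-dimensional configuration fails to be a pyramid iff it contains a $2$-dimensional circuit, i.e.\ four points no three of which are collinear. The easy direction is that if $\sigma$ contains such a quadruple, then deleting any single point of $\sigma$ leaves at least three non-collinear points, so no apex--base decomposition can exist. For the converse, assuming $|\sigma|\ge 3$ is not a pyramid, I would start with three non-collinear points of $\sigma$ and add a fourth from $\sigma$ avoiding the three lines they determine; the non-pyramidal hypothesis guarantees such a fourth point exists after possibly swapping one of the initial three, producing the required $2$-dimensional circuit. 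This step gives case (i).

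For $\dim\sigma=1$, $\sigma$ lies on a single line $\ell$. If $|\sigma|=2$ then $\sigma$ is a pyramid in the trivial sense (one vertex as apex, the other as base), so by Theorem~\ref{teo:char_punto_singular} the corresponding $q$ is non-singular. If $|\sigma|\ge 3$, removing any point of $\sigma$ still leaves at least two collinear points spanning $\ell$, so $\sigma$ is not a pyramid. The only integer affine functions $L$ (up to scalar) satisfying $\sigma\subseteq\{L=0\}$ together with $\dim\langle\{L=0\}\cap A\rangle=d-1=1$ are those with $\{L=0\}=\ell$, and all such $L$ produce the same Euler derivative. By Theorem~\ref{teo:char_punto_singular}, $q$ is singular iff $q$ lies in the relative interior of $\sigma^{*}$ and in $\mathcal{T}(\partial f/\partial L)$, which is the content of case (ii).

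Assembling the two cases, if some $2$-dimensional cell contains a $2$-dimensional circuit then the dual vertex of $\mathcal{T}(f)$ is singular, so $f$ is singular. Otherwise every $2$-dimensional cell is a pyramid and every vertex of $\mathcal{T}(f)$ is non-singular; in that situation $f$ can only be singular through a point in the relative interior of an edge of $\mathcal{T}(f)$, which is precisely case (ii). The main obstacle I anticipate is passing from the statement's ``$\sigma^{*}\cap\mathcal{T}(\partial f/\partial L)\neq\emptyset$'' to the relative interior intersection that my analysis requires: the endpoints of $\sigma^{*}$ are vertices of $\mathcal{T}(f)$ whose dual $2$-dimensional cells are pyramids under the failure of (i), hence non-singular, so a short argument exploiting the piecewise linear structure of $\sigma^{*}$ and $\mathcal{T}(\partial f/\partial L)$ should convert a non-empty boundary-only intersection into an interior intersection point, closing the equivalence.
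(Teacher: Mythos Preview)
Your approach is the paper's: both arguments reduce directly to Theorem~\ref{teo:char_punto_singular} by splitting on $\dim\sigma\in\{1,2\}$, and you are simply supplying the details (the equivalence ``not a pyramid $\Leftrightarrow$ contains a $2$-dimensional circuit'' for a planar cell, and the observation that a collinear $\sigma$ with $|\sigma|\ge 3$ determines a unique relevant $L$) that the paper's one-paragraph argument leaves implicit.

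On the obstacle you flag at the end, your proposed fix is aimed at the wrong target. You do not need a piecewise-linear continuity argument to push a boundary-only intersection into the interior, because under the failure of (i) the endpoints of $\sigma^{*}$ are never in $\mathcal{T}(\partial f/\partial L)$ at all. Indeed, if $q_{0}$ is an endpoint of $\sigma^{*}$ with dual $2$-cell $\sigma'\supsetneq\sigma$, then $\sigma'$ is a pyramid; since $\sigma\subseteq\sigma'$, $|\sigma|\ge 3$, and $\sigma\subseteq\{L=0\}$, the base line of the pyramid must coincide with $\{L=0\}$, so the apex $i_{1}$ is the \emph{unique} point of $\sigma'$ with $L(i_{1})\neq 0$. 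Hence at $q_{0}$ the minimum of $\partial f/\partial L$ is achieved only at $i_{1}$, giving $q_{0}\notin\mathcal{T}(\partial f/\partial L)$. Therefore $\sigma^{*}\cap\mathcal{T}(\partial f/\partial L)=\operatorname{relint}(\sigma^{*})\cap\mathcal{T}(\partial f/\partial L)$ automatically, and the closure/interior distinction in the statement is immaterial.
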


The first item is contained in Theorem~\ref{teo:char_punto_singular}. With
respect to the second item, note that $\sigma$ contains a circuit $Z$ (of
dimension $1$) and for any integral affine function $L$, $Z \subset \{L=0\}$ if
and only if $\sigma \subset \{L=0\}$. The result follows again by
Theorem~\ref{teo:char_punto_singular}.

In case item ii) of Proposition~\ref{teo:singular_dim_1_and_2} holds, $\sigma^*
\cap \mathcal{T}(\frac{\partial f}{\partial L}) \not= \emptyset$ if and only if
there is a cell $\sigma'$ of dimension $2$ containing $\sigma$, such that
$\sigma' \cap \{L \not= 0\} = \{ i_1\}$ consists of a single point $i_1 \in A$
and, assuming $L(i_1) > 0$,  there exists another point $i_2 \in A - \sigma'$
with $L(i_2) < L(i_1)$.  This is a particular case of the following more general
result. Recall that we always assume that the convex hull of our exponent set
$A$ is full dimensional.

\begin{proposition} \label{prop:anotherpt}
Let $A\subseteq \mathbb{Z}^d$. Let $p \in \R^n$ such that $\Pi_p$ contains a top
dimensional cell $\sigma'$ which contains a circuit $Z$ of dimension $d-1$ and
it is a pyramid over a point $i_1$. Let $L$ be an affine integer function such
that $Z \subset \sigma'\cap \{L=0\}$ and $L(i_1)> 0$.  Then, there exists a
singular point $q \in {\mathcal T} (\oplus_{i\in A} \, p_i \odot w^i) \cap
\{(\sigma'\cap \{L=0\})^*\}$  with $\langle  q, i_1 \rangle >0$ if and only if
there exists another point $i_2 \in A$ not in $\sigma'$ such that $L(i_2) <
L(i_1)$. In particular, if $Z$ intersects the interior of $N(A)$, then
${\mathcal T}(\oplus_{i\in A} \, p_i \odot w^i)$ is singular.
\end{proposition}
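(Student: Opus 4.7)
The plan is to reduce the question to a careful study of one single edge of $\mathcal{T}(f)$, use the Euler derivative along $L$ to identify both halves of the iff, and then appeal to the circuit $Z$ to automatically discharge every remaining Euler condition coming from Theorem~\ref{teo:singular_point_by_derivatives}. Write $L(w)=\langle j,w\rangle+\beta$ with $j\in\Z^d$, let $q_0$ be the vertex of $\mathcal{T}(f)$ dual to $\sigma'$, and set $\tau:=\sigma'\cap\{L=0\}$. Because $\sigma'$ is a pyramid with apex $i_1$ over the base $\tau$ spanning $\{L=0\}$, the dual cell $\tau^*$ is one--dimensional and emanates from $q_0$ in the $+j$ direction: a displacement $v$ keeping the values $p_i+\langle q,i\rangle$ equal on $\tau$ must satisfy $\langle v,i-i'\rangle=0$ for $i,i'\in\tau$, forcing $v\parallel j$, while $\langle j,i_1-i\rangle=L(i_1)>0$ for $i\in\tau$ is exactly the condition that moving in direction $+j$ pushes $p_{i_1}+\langle q,i_1\rangle$ off the minimum. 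Parametrize $q(t):=q_0+tj$ and set $v_i(t):=p_i+\langle q(t),i\rangle$. One computes $v_i(t)=v_i(0)-t\beta$ for $i\in\tau$, $v_{i_1}(t)=f(q(t))+tL(i_1)$, and $v_{i'}(t)=f(q(t))+d_{i'}+tL(i')$ for $i'\in A\setminus\sigma'$, where $d_{i'}:=v_{i'}(0)-v_{i_1}(0)>0$. In this setup the stated condition $\langle q,i_1\rangle>0$ amounts to $q$ lying in the open portion $t>0$ of $\tau^*$, consistent with the fact that $q_0$ itself is non--singular by Theorem~\ref{teo:char_punto_singular}.

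By Theorem~\ref{teo:singular_point_by_derivatives}, $q(t)$ is singular iff $q(t)\in\mathcal{T}(\frac{\partial f}{\partial L'})$ for every integer affine $L'$ in the tropical basis. The decisive derivative is $\frac{\partial f}{\partial L}$: since $L\equiv 0$ on $\tau$, the support of $\frac{\partial f}{\partial L}$ omits $\tau$ entirely, so $q(t)\in\mathcal{T}(\frac{\partial f}{\partial L})$ is the demand that $\min\{v_i(t):L(i)\neq 0\}$ be attained twice. The only candidates are $v_{i_1}(t)$ and the $v_{i'}(t)$ with $i'\in A\setminus\sigma'$, $L(i')\neq 0$. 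For the ``singular $\Rightarrow$ $i_2$'' direction, coincidence of two minimal values compared with $v_{i_1}(t)=f(q(t))+tL(i_1)$ forces $d_{i'}+t(L(i')-L(i_1))\le 0$ for some $i'\in A\setminus\sigma'$ and $t>0$; as $d_{i'}>0$ this requires $L(i')<L(i_1)$, and $i_2:=i'$ is the witness. Conversely, given an $i_2\in A\setminus\sigma'$ with $L(i_2)<L(i_1)$, choose it so as to minimize $t_{i_2}:=d_{i_2}/(L(i_1)-L(i_2))$, and set $t^*:=t_{i_2}$, $q^*:=q(t^*)$. By construction $v_{i_1}(t^*)=v_{i_2}(t^*)$; for any other $i''\in A\setminus\sigma'$ with $L(i'')\neq 0$, minimality of $t^*$ (if $L(i'')<L(i_1)$) or the positivity $d_{i''}>0$ (if $L(i'')\ge L(i_1)$) gives $v_{i''}(t^*)\ge v_{i_1}(t^*)$. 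Hence $q^*\in\mathcal{T}(\frac{\partial f}{\partial L})$; and a similar comparison shows $t^*<t_{\mathrm{end}}$, the exit time of the edge, so $q^*$ sits in the relative interior of $\tau^*$.

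It remains to check $q^*\in\mathcal{T}(\frac{\partial f}{\partial L'})$ for every other relevant $L'$, and this is where the circuit $Z$ becomes essential; this is the main obstacle. I split on $|\tau\cap\{L'\neq 0\}|$. If it is $\ge 2$, then $\frac{\partial f}{\partial L'}$ still contains at least two monomials indexed by $\tau$, and both attain the minimum $f(q^*)$, so $q^*\in\mathcal{T}(\frac{\partial f}{\partial L'})$ automatically. If $\tau\subseteq\{L'=0\}$, then $\{L=0\}=\langle\tau\rangle\subseteq\{L'=0\}$; both being $(d-1)$-dimensional hyperplanes they coincide, so $\frac{\partial f}{\partial L'}=\frac{\partial f}{\partial L}$ and we are done. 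The remaining possibility $|\tau\cap\{L'\neq 0\}|=1$ must be excluded: in that case $|\tau|-1$ points of $\tau$ would lie in a $(d-2)$-dimensional affine subspace of $\{L=0\}$, so since $|Z|=d+1$ at most one point of $Z$ is left out, and at least $d$ points of $Z$ lie in that $(d-2)$-flat; $d$ points in a $(d-2)$-dimensional affine space are necessarily affinely dependent, producing a proper affinely dependent subset of $Z$ and contradicting the minimality of the circuit. This completes the equivalence. Finally, if $Z\cap\mathrm{int}(N(A))\neq\emptyset$, then the hyperplane $\{L=0\}$ cuts through the interior of the lattice polytope $N(A)$, so some vertex $i_2$ of $N(A)$ (hence $i_2\in A$) satisfies $L(i_2)<0<L(i_1)$; as $L\ge 0$ on $\sigma'=\tau\cup\{i_1\}$, this $i_2$ lies outside $\sigma'$ and satisfies $L(i_2)<L(i_1)$, so the main iff produces a singular point of $\mathcal{T}(\oplus_{i\in A}p_i\odot w^i)$.
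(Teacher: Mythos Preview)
Your argument follows the same route as the paper's: both reduce the singularity test on the edge $\tau^*$ to the single Euler derivative $\partial f/\partial L$ (the paper via an implicit appeal to Theorem~\ref{teo:char_punto_singular}, you via the explicit circuit case-analysis on $|\tau\cap\{L'\neq 0\}|$, which is just an inline specialization of that theorem), and then pick $t^*=d_{i_2}/(L(i_1)-L(i_2))$ minimally to produce the required tie. The only wrinkle is that your identification $\langle q,i_1\rangle>0\Leftrightarrow t>0$ is literally valid only after the paper's normalization $q_0=0$, $\beta=0$ (which it performs by an affine change of coordinates before stating the condition); apart from that cosmetic point the arguments coincide.
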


\begin{proof}
We can assume that $ L(w) = j_1 w_1 + \dots + j_d w_d + \b$, with $j_1, \dots,
j_d$ coprime. To make the notation easier, we apply an invertible affine linear
trans\-formation to our configuration $A$ so that $L (w) = w_1$.  Denote by
$\varphi(w) = \varphi_1 w_1 + \dots +\varphi_d w_d$ the linear form which
interpolates $p$ over the cell $\sigma'$, that is $\varphi(i) = p_i$ for all $i
\in \sigma'$ and $\varphi(i) < p_i$ for all $i \notin \sigma'$. Thus,  $p'\in
\R^n$ defined by $p'_i:= p_i - \varphi(i)$ defines the same marked subdivision.
So we can assume that $p_i =0$ for all $i \in \sigma'$ and $p_i > 0$ otherwise.
Therefore, $q=(0,\dots,0)$ is the vertex of $\mathcal T$ dual to $\sigma'$,
which is not singular since it does not lie in ${\mathcal T}(\frac{\partial
f}{\partial L})$. There will be a singular point $q= (q_1, 0, \dots, 0)$ in
$(\sigma'\cap \{L=0\})^*$ with $\langle  q, i_1 \rangle > 0$ if and only if
there exists $q_1 > 0$ and two points $i_2,i_3$ in $A$ such that 
\begin{equation*}
 q_1 L(i_2) + p_{i_2} =  q_1 L(i_3) + p_{i_3}  \leq q_1  L(i) + p_i,
\end{equation*}
for all $i \in A$ with $L(i) \not =0$. Note that as $\sigma'$ is a pyramid over
$i_1$, for any point $i_2$ in $A$ for which $L(i_1)=L(i_2)$ it holds that $i_2$
is not in $\sigma'$, or equivalently, $p_{i_2} >0$. Assume first that there is a
point $i$ in $A':=A - \sigma'$ with $L(i) < L(i_1)$ and let $i_2$ with these
properties and such that moreover $\frac {p_{i_2}}{L(i_1) - L(i_2)} = {\rm
min}_{ i \in A'}  \frac {p_{i}}{L(i_1) - L(i)}$. Then, it is enough to take $q_1
= \frac {p_{i_2}}{L(i_1) - L(i_2)}$ and $i_3 = i_1$. Reciprocally, assume there
exists a singular point $q = (q_1, 0, \dots, 0)$ with $q_1 > 0$. As $q \in
{\mathcal T}(\frac{\partial f}{\partial L})$, there exist two points $i_2 \neq
i_3$ such that $q_1 L(i_2) +p_{i_2} = q_1 L(i_3) + p_{i_3} \le q_1 L({i_1})$.
Assume $i_2 \not=i_1$. Then, $0 < p_{i_2} \le q_1 (L({i_1}) - L({i_2}))$.
Therefore, $L({i_2}) < L({i_1})$, as wanted. The condition that $Z$ intersects
the interior of $N(A)$ guarantees the existence of a point $i_2 \in A - \sigma'$
with $L({i_2}) < L({i_1})$.
\end{proof}

Note that the point $i_2$ in the statement of Proposition~\ref{prop:anotherpt}
does not need to belong to any cell in $\Pi_p$.

\section{Weight classes and the co-Bergman fan of $\widetilde{A}$}
\label{sec:Bergman}

In this section, we relate our definitions to the results and definitions
in~\cite{Bergman-complex,tropical-discriminant,sing-fixed-point}. As before, $f
=\bigoplus_{i\in A} p_i \odot w^{\odot i} \in \mathbb{R}[w_1, \ldots, w_d]$
denotes a tropical polynomial with  support $A$.

\begin{definition}\label{def:weight_class}
Let $q$ be in the interior of a cell $\sigma^* \subseteq \mathcal{T}(f)$. We
define the \emph{flag} of $f$ with respect to $q$ as the flag of subsets
${\mathcal F}(q)$ of $A$ defined inductively by: $F_0(q)=\sigma \subsetneq
F_1(q)\subsetneq \ldots \subsetneq F_r(q)$, $dim \langle  F_r(q) \rangle =d$,
and for any $\ell$:
$F_{\ell+1}(q) - F_\ell(q)$ is the subset of $A- \langle F_\ell(q)\rangle$ where
the tropical polynomial $\bigoplus_{i\in A - \langle F_\ell(q)\rangle }\, p_i
\odot w^i$ attains its minimum at $q$. The {\em weight class} of the flag
$\mathcal{F}(q)$ are all the points $q'\in \mathcal{T}(f)$ for which
$\mathcal{F}(q) =\mathcal{F}(q')$ 
\end{definition}

Theorems~\ref{teo:singular_point_by_derivatives}
and~\ref{teo:char_punto_singular} provide an algorithm to decide if $q\in
\mathcal{T}(f)$ is singular or not, which is similar to the method presented in
\cite{Tesis-master-Ochse} but which works without any restrictive hypothesis on
$A$. The algorithm returns an $L$ such that $q\notin \mathcal{T}(\frac{\partial
f}{\partial L})$ or ``$q$ is a singular point". First, we compute $F_0(q) =
\sigma$. If $\sigma$ is a pyramid,  there exists $i \in F_0(q)$ such that
$i\notin \langle F_0(q) - \{i\}\rangle$ and we can compute an $L$ defining the
facet $\langle F_0(q) -\{i\}\rangle$ of $F_0(q)$, which verifies $q\notin
\mathcal{T}(\frac{\partial f}{\partial L})$. If this is not the case and the
dimension of $\langle  F_0(q) \rangle < d$, we compute $F_1(q)$ and we iterate
the procedure.  We stop when we find  an $L$ that certifies that $q$ is not
singular or when $F_\ell$ spans an affine dimension $d$, in which case $q$ is
singular.

Number the elements $i_1 \dots, i_n$ of $A$ and call $A \in \Z^{d\times n}$
matrix with these columns. Let $\widetilde{A}$, as in the Introduction, be the
integer matrix with columns $(1,i_k), k=1, \dots, n$. Thus, the vector
$\overline{1}=(1, \dots, 1)$ lies in the row span of $\widetilde{A}$. In fact,
as the $A$-discriminant is an affine invariant of the configuration $A$, we
could assume without loss of generality that $A$ has this property, but we
prefer to point out the fact the we are interested in affine properties of the
configuration $A$, equivalent to linear properties of $\widetilde{A}$.

Let $L(w) =j_1 w_1+\ldots+ j_d w_d +\b$ be an affine linear function. We can
associate to $L$ the linear form $\ell_L(x_1,\dots, x_n) = \sum_{k=1}^n L(i_k)\,
 x_k $. Then, the support of $\ell_L$ is precisely $A - \{ L=0\}$. Moreover, the
coefficient vector $(L(i_k))_{k=1}^n$ lies in the row span of $\widetilde{A}$,
as it is obtained as the product $(\b, j_1, \dots, j_d) \cdot \widetilde{A}$,
and all linear forms in the row span  of $\widetilde{A}$ (which span the ideal
of $\ker(\widetilde{A})$), are of this form. Let ${\mathcal B} \subset \Z^c$ be
a Gale dual configuration of $\widetilde{A}$. The linear forms in the row span
of $\widetilde{A}$ with minimal support correspond to the circuits in $\mathcal
B$ and to the affine linear forms $L$ such that $\dim \langle A \cap
\{L=0\}\rangle = d-1$. 

Denote by $v_1, \dots, v_n$ (tropical) variables. The tropicalization
$Trop(\ell_L)$ equals:
\[Trop(\ell_L)(v) \, = \, \bigoplus_{L(i_k) \not =0} 0 \odot v_k. \]
We recover the fact that $H_1$ is an incarnation of $\ker(\widetilde{A})$ and so
$Trop(H_1)$ equals the co-Bergman fan ${\mathcal B}^*(\widetilde{A})$ (cf.
Proposition~\ref{prop:tbH1}). The flag of sets $\mathcal F(q)$ and the weight
classes in Definition~\ref{def:weight_class} coincide for instance with those
occurring in~\cite[Page~3]{Bergman-complex}.

Our previous algorithm can be modified to decide whether $f = \bigoplus_{i\in
A}\, p_i \odot w^i$ contains a singular point, that is, to decide whether $p$
lies in the tropical $A$-discriminant, and in this case, to compute all the
singular points. Just notice that, as weight classes induce a fine subdivision
on the co-Bergman fan {$\mathcal{B}^*(\widehat{A})$}, they also induce a finer
polyhedral subdivision of $\mathcal{T}(f)$. Two points $q$ and $q'\in
\mathcal{T}(f)$ belong to the relative interior of the same cell of the fine
subdivision if and only if $q,q'$ belong to the same weight class. If $\sigma$
is a cell of the fine subdivision of $\mathcal{T}(f)$, then either every point
of $\sigma$ is singular or all points are regular. Since the number of cells in
this subdivision is finite and computable, we can derive an algorithm to compute
all singular points of $\mathcal{T}(f)$ that uses this information.

\begin{proposition}\label{prop:singcells}
The (finitely many) weight classes associated to a tropical polynomial $f =
\bigoplus_{i\in A}\, p_i \odot w^i$ with support $A$, are relatively open
polyhedral cells which refine the polyhedral structure of $\mathcal{T}(f)$ dual
to the marked coherent subdivision $\Pi_p$. If $C$ is a cell in this new
subdivision, then all points in $C$ are singular or all of them are regular. The
previous algorithm applied to any of the points in $C$, allows us to decide if
$C$ is a set of singular or regular points.
\end{proposition}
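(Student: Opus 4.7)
The plan is to prove the three claims in sequence: weight classes are relatively open polyhedral cells refining $\mathcal{T}(f)$; singularity is constant on each weight class; and the algorithm applied to any representative determines the answer.

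For the first claim, I would fix an abstract flag $\mathcal{F}: F_0 \subsetneq F_1 \subsetneq \cdots \subsetneq F_r$ of subsets of $A$ with $\dim\langle F_r\rangle = d$. Unwinding Definition~\ref{def:weight_class}, the condition $\mathcal{F}(q) = \mathcal{F}$ is equivalent to the following system, stated inductively for $\ell = 0, \ldots, r-1$ (using the convention $F_{-1} = \emptyset$, $\langle F_{-1}\rangle = \emptyset$): the affine equalities $p_i + \langle q, i\rangle = p_j + \langle q, j\rangle$ for all $i, j \in F_{\ell+1} - F_\ell$, and the strict affine inequalities $p_k + \langle q, k\rangle > p_i + \langle q, i\rangle$ for any $i \in F_{\ell+1} - F_\ell$ and every $k \in (A - \langle F_\ell\rangle) \setminus (F_{\ell+1} - F_\ell)$. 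Each equality cuts out a hyperplane and each strict inequality an open halfspace in $q$-space, so the weight class $W_\mathcal{F}$ is a relatively open polyhedral cell. Because $F_0(q)$ is, by definition, the marking $\sigma$ of the dual cell $\sigma^*$ whose relative interior contains $q$, every weight class lies in a single such $\sigma^*$, which proves the refinement claim.

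For the second claim, I would argue that singularity depends only on the flag. By Theorem~\ref{teo:singular_point_by_derivatives}, $q$ is singular iff $q \in \mathcal{T}(\frac{\partial f}{\partial L})$ for every $L \in \mathcal{L}$, equivalently the minimum of $p_i + \langle q, i\rangle$ over $i \in A - \{L = 0\}$ is attained at least twice. Fix such an $L$ and let $\ell^*$ be the smallest index with $F_{\ell^*}(q) \not\subset \{L=0\}$; it exists because $\dim\langle F_r(q)\rangle = d > \dim \{L=0\}$. For $\ell < \ell^*$ we have $F_\ell(q) \subset \{L=0\}$, hence $\langle F_\ell(q)\rangle \subseteq \{L=0\}$, so $A - \{L=0\} \subseteq A - \langle F_\ell(q)\rangle$. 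From the inductive definition of the flag it follows that the minimum over $A - \{L=0\}$ is attained exactly on $(F_{\ell^*}(q) - F_{\ell^*-1}(q)) - \{L=0\}$, which is nonempty by the choice of $\ell^*$. Thus $q \in \mathcal{T}(\frac{\partial f}{\partial L})$ iff $|(F_{\ell^*}(q) - F_{\ell^*-1}(q)) - \{L=0\}| \ge 2$, a condition that depends only on $\mathcal{F}(q)$.

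The two previous steps combine to give the main content: every point of a cell $C$ of the common refinement belongs to a single weight class, so the singularity status is constant on $C$. The algorithm stated just before the proposition reads off the flag of its input point and returns either a certificate of singularity or a concrete $L$ showing non-singularity; consequently, running it on any chosen $q \in C$ decides which case $C$ falls into. The main subtlety I expect is the careful bookkeeping in the second paragraph, where one must track the distinction between $A - \{L=0\}$ and $A - \langle F_\ell\rangle$ when reading minima off the flag, including the boundary case $\ell^* = 0$.
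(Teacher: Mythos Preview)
Your argument is correct. The paper does not supply a formal proof of this proposition; it merely states the result after a short paragraph pointing out that the weight classes of Definition~\ref{def:weight_class} coincide with those appearing in the Ardila--Klivans fine subdivision of the (co-)Bergman fan~\cite{Bergman-complex}, and then asserts that singularity is constant on each cell. In effect, the paper outsources both the polyhedral structure of the weight classes and the constancy of singularity to the known fact that the Bergman fan is a union of weight-class cones.

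Your route is more self-contained: you write the weight class directly as the solution set of an explicit finite system of affine equalities and strict inequalities, and you prove constancy of singularity by showing, for each $L\in\mathcal{L}$, that the argmin of $p_i+\langle q,i\rangle$ over $A-\{L=0\}$ is exactly $(F_{\ell^*}-F_{\ell^*-1})-\{L=0\}$, a set depending only on the flag. This second step is the real content, and your verification is clean; the inclusion $A-\{L=0\}\subseteq A-\langle F_{\ell^*-1}\rangle$ (because $\langle F_{\ell^*-1}\rangle\subseteq\{L=0\}$) is exactly what is needed to read off the argmin from the flag, and the boundary case $\ell^*=0$ falls out of your convention $\langle F_{-1}\rangle=\emptyset$. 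What your approach buys is an elementary, citation-free proof; what the paper's approach buys is a conceptual link to the matroidal picture, since your flag condition $|(F_{\ell^*}-F_{\ell^*-1})-\{L=0\}|\ge 2$ for all $L$ is precisely the statement that each $\langle F_\ell\rangle$ is a flat of the matroid of $\widetilde A$, recovering the Ardila--Klivans description of $\mathcal{B}^*(\widetilde A)$.
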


We can thus reprove~\cite[Theorem~1.1]{tropical-discriminant}: a point $p
=(p_i)_{i\in A}$ lies in the tropicalization $Trop(X_A^*)$ of the
$A$-discriminant if and only if there exists a singular point $q \in
\mathcal{T}(\bigoplus_{i\in A} p_i \odot w^i)$.  This happens if and only if the
$n$-th dimensional vector $v =(v_i)_{i\in A}= p + \langle q, \cdot \rangle$
defined by the equalities
\[ v_i = p_i + \langle q, i \rangle, \quad i \in A,\]
lies in $\mathcal{B}^*(\widehat{A})$, by the characterization given in
Theorem~\ref{teo:singular_point_by_derivatives} (expressed by the previous
algorithm). Equivalently, if and only if 
\begin{equation} \label{eq:pvq}
p = v + \langle -q, \cdot \rangle 
\end{equation}
with $v$ in  $\mathcal{B}^*(\widehat{A})$ and $q \in \R^d$. That is, if and only
if $p$ lies in the Minkowski sum of $\mathcal{B}^*(\widehat{A})$ and the row
span of $A$, which equals the Minkowski sum of $\mathcal{B}^*(\widehat{A})$ and
the row span of $\widetilde{A}$, since the vector $\overline{1} \in
\mathcal{B}^*(\widehat{A})$. It follows that if $p \in \mathcal{T}(X_A^*)$, the
singular points of $\mathcal{T}(\bigoplus_{i\in A} p_i \odot w^i)$ are those $q$
which occur in a decomposition of the form~\eqref{eq:pvq}.

\medskip

We end this section with some examples that exhibit different interesting
features of these objects. The next example shows a tropical polynomial $f$
whose coefficients lie in a codimension one cone of the secondary fan of $A$,
for which $\mathcal{T}(f)$ has two singular points. 

\begin{example}\label{ex:example_discriminant}
Let $f=0\odot w_1^2\oplus 0\odot w_1^2\odot w_2\oplus 0\odot w_1^2\odot w_2^2
\oplus 7\odot w_2 \oplus 4\odot w_1\odot w_2$ $\oplus 7\odot w_1^4\odot w_2$.
The subdivision induced by $f$ in its Newton Polygon is a triangulation except
for the circuit of exponents in $\{w_1^2, w_1^2w_2, w_1^2w_2^2\}$. The
$A$-discriminant of the support of $f$ is (with the obvious meaning of the
variables $a_{ij}$)

{\tiny
\noindent
$\underline{256 a_{01}^{2} a_{21}^{8}}-192 a_{11}^{4} a_{20} a_{21}^{4} a_{22}
$  $-\underline{4096 a_{01}^{2} a_{20} a_{21}^{6} a_{22}} -$ 
$6144 a_{01} a_{11}^{2} a_{20}^{2} a_{21}^{3} a_{22}^{2} +$
$\underline{24576 a_{01}^{2} a_{20}^{2}a_{21}^{4} a_{22}^{2}}$ $-
$ $1024 a_{11}^{4} a_{20}^{3}$ $a_{22}^{3} +$ $8192 a_{01} a_{11}^{2} a_{20}^{3}
a_{21} a_{22}^{3} - $ $\underline{65536a_{01}^{2} a_{20}^{3} a_{21}^{2}
a_{22}^{3}} +$ $\underline{65536a_{01}^{2} a_{20}^{4} a_{22}^{4}}
+$ $216 a_{11}^{6} a_{21}^{3} a_{41} - $ $2016 a_{01} a_{11}^{4} a_{21}^{4}
a_{41}$ $+$ $5632$ $a_{01}^{2} a_{11}^{2} a_{21}^{5} a_{41} -$ $4096 a_{01}^{3}$
$a_{21}^{6}$ $a_{41} +$ $2592$ $a_{11}^{6}$ $a_{20} a_{21} a_{22} a_{41} - $
$20736 a_{01} a_{11}^{4} a_{20} a_{21}^{2} a_{22} a_{41} +$ $28672 a_{01}^{2}
a_{11}^{2} a_{20}$ $a_{21}^{3}$ $a_{22} a_{41} +$ $16384 a_{01}^{3}$ $a_{20}
a_{21}^{4} a_{22} a_{41} +$ $4608 a_{01} a_{11}^{4} a_{20}^{2} a_{22}^{2} a_{41}
- $ $204800 a_{01}^{2} a_{11}^{2} a_{20}^{2}$ $a_{21}$ $a_{22}^{2} a_{41} +$
$65536 a_{01}^{3}$ $a_{20}^{2}$ $a_{21}^{2} a_{22}^{2} a_{41} - $ $262144
a_{01}^{3} a_{20}^{3} a_{22}^{3} a_{41} +$ $729 a_{11}^{8} a_{41}^{2} - $ $7776$
$a_{01}$ $a_{11}^{6}a_{21} a_{41}^{2} +$ $27648 a_{01}^{2}$ $a_{11}^{4}$
$a_{21}^{2}$ $a_{41}^{2} -$ $38912 a_{01}^{3} a_{11}^{2}$ $a_{21}^{3} a_{41}^{2}
+24576 a_{01}^{4} a_{21}^{4} a_{41}^{2} -55296$ $a_{01}^{2} a_{11}^{4} a_{20}
a_{22} a_{41}^{2} +$ $122880$ $a_{01}^{3}$ $a_{11}^{2} a_{20} a_{21} a_{22}
a_{41}^{2} +$ $65536$ $a_{01}^{4} a_{20}a_{21}^{2} a_{22} a_{41}^{2}$ $+393216
a_{01}^{4} a_{20}^{2} a_{22}^{2}a_{41}^{2} - $ $13824 a_{01}^{3}$ $a_{11}^{4}
a_{41}^{3} +$ $73728 a_{01}^{4}a_{11}^{2} a_{21} a_{41}^{3}$ $-65536 a_{01}^{5}$
$a_{21}^{2}$ $a_{41}^{3}$ $-262144a_{01}^{5} a_{20} a_{22} a_{41}^{3} +$ $65536$
$a_{01}^{6}$ $a_{41}^{4}+ $ $768 a_{11}^{4} a_{20}^{2} a_{21}^{2}
a_{22}^{2}+$ $16 a_{11}^{4} a_{21}^{6} -$ $128 a_{01} a_{11}^{2} a_{21}^{7}= $
$1536 a_{01} a_{11}^{2} a_{20} a_{21}^{5} a_{22} $.}

The minimum valuation of the terms in the $A$-discriminant is attained for any
choice of coefficients $a_{ij}$ with valuations prescribed by the coefficients
of $f$, in the five underlined monomials of the $A$-discriminant. Three of these
monomials $a_{01}^{2} a_{20}^{2} a_{21}^{4} a_{22}^{2}, a_{01}^{2} a_{20}
a_{21}^{6} a_{22}, a_{01}^{2} a_{20}^{3} a_{21}^{2} a_{22}^{3}$ lie in the
convex hull of the other two $a_{01}^{2} a_{21}^{8}, a_{01}^{2} a_{20}^{4}
a_{22}^{4}$. Hence, the exponents of the monomials of the $A$-discriminant where
the minimum is attained lie on an edge, and the vector of coefficients of $f$
belongs to a maximal cell of the tropicalization of the $A$-discriminantal
variety. The singular points of this curve are $(3,0),(-1,0)$ (See
Figure~\ref{fig:example_discriminant}). Two lifts of the curve and the singular
point are: $t^{7} x^{4}y + x^{2}y^{2} + (-3 t^{13} + t - 2) x^{2}y + x^{2} + (2
t^{16} - 2 t^{4}) xy + t^{7} y$ with a singularity at $(t^3,1)$, and $t^{7}
x^{4}y + x^{2}y^{2} + (t^{9} - 3 t^{5} - 2) x^{2}y + x^{2} + (-2 t^{8} + 2
t^{4}) xy + t^{7} y$ with singularity at $(1/t,1)$.
\end{example}
\begin{figure}
\begin{center}
\includegraphics[width=0.4\textwidth]{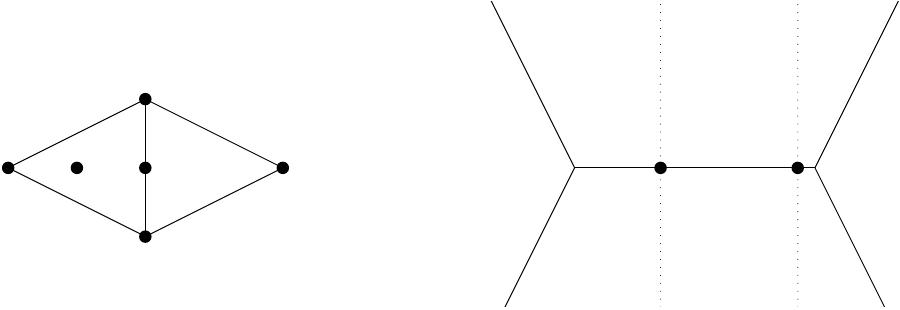}
\end{center}
\caption{Newton subdivision and singular curve of
Example~\ref{ex:example_discriminant} (in bold) and $\frac{\partial f}{\partial
(w_1-2)}$ (pointed)}\label{fig:example_discriminant}
\end{figure}

Our next example shows that for two vectors of coefficients inducing the same
coherent subdivision of $A$, the associated flags need not coincide.

\begin{example} \label{ex:howDareformed}
Let $A =\{\alpha_1=(0,0),$ $\alpha_2=(0,1),$ $\alpha_3 = (0,2),$ $\alpha_4 =
(2,0),$ $\alpha_5 = (1,2), \alpha_6 = (-2,0)\}$ and $p_v = (0,0,0,0,v_1, v_2)$,
with $v=(v_1, v_2) \in R_{> 0}^2$ arbitrary. In this case, $p_v$ defines the
curve given by $f_v=0 \oplus 0\odot w_2 \oplus 0\odot w_2^2 \oplus 0\odot w_1^2
\oplus v_1\odot w_1\odot w_2^2 \oplus v_2\odot w_1^{-2}$. The marked subdivision
$\Pi$ induced by any $p_v$ contains three maximal cells: $\sigma_1 = \{\alpha_1,
\alpha_2, \alpha_3, \alpha_4\}$, $\sigma_2=\{\alpha_3, \alpha_4, \alpha_5\}$,
$\sigma_3=\{\alpha_1, \alpha_2, \alpha_3, \alpha_6\}$. We claim that all these
curves are singular, with a singular point in the cell dual to the marked edge
$\{\alpha_1, \alpha_2, \alpha_3\}$. However, as we will see, the number and
locations of the singular points vary.

Cell $\sigma_1$ is dual to the point $(0,0)$ in the curve and $\sigma_3$ is dual
to the point $(v_1/2, 0)$. By Theorem~\ref{teo:singular_dim_1_and_2} there is a
singularity if there is a point in the segment $[(0,0), (v_2/2,0)]$ that also
belongs to the partial derivative $g_v=\frac{\partial f_v}{\partial (w_1=0)} =
0\odot w_1^2 \oplus v_1\odot w_1\odot w_2^2 \oplus v_2\odot w_1^{-2}$. In the
segment, $g_v$ attains its minimum at $(0,0)$ on the linear form associated to
$\alpha_4$ and $g_v$ attains its minimum at $(v_2/2, 0)$ on the linear form
associated to $\alpha_6$. Since $g_v$ is a continuous function, there must be a
point $(q, 0)$ where the minimum of $g_v$ is attained twice, so this point will
be a singularity of $f$ (cf. \cite{sing-fixed-point}). This reasoning works for
any hypersurface in dimension $d$ with a circuit in the interior of $A$ of
dimension $d-1$.
\begin{itemize}
\item If $-4v_1 + v_2 < 0$ there is a singular point at $q=(v_2/4, 0)$, the
flag with respect to $q$ is: $\{\alpha_1,\alpha_2,$
$\alpha_3\}$
$\subsetneq$ $\{\alpha_1, \alpha_2, \alpha_3, \alpha_4,\alpha_6\} \subsetneq A$.
\item If $-4v_1 + v_2 = 0$ there is a singular point at $q=(v_2/4, 0)$, the
flag with respect to $q$ is: $\{\alpha_1,
\alpha_2,\alpha_3\} \subsetneq A$
\item If $-4 v_1 + v_2 > 0$ we get two different singular points:
\begin{itemize}
\item $q=(v_1, 0)$ with flag with respect to $q$:\\
$\{\alpha_1,\alpha_2,\alpha_3\} \subsetneq \{\alpha_1, \alpha_2, \alpha_3,
\alpha_4, \alpha_5\} \subsetneq A$.
\item $q=((v_2-v_1)/3, 0)$ with flag with respect to
$q$:\\
$\{\alpha_1,\alpha_2,\alpha_3\} \subsetneq \{\alpha_1, \alpha_2, \alpha_3,
\alpha_5, \alpha_6\} \subsetneq A$.
\end{itemize}
\end{itemize}
\begin{figure}
\begin{center}
\includegraphics[width=0.9\textwidth]{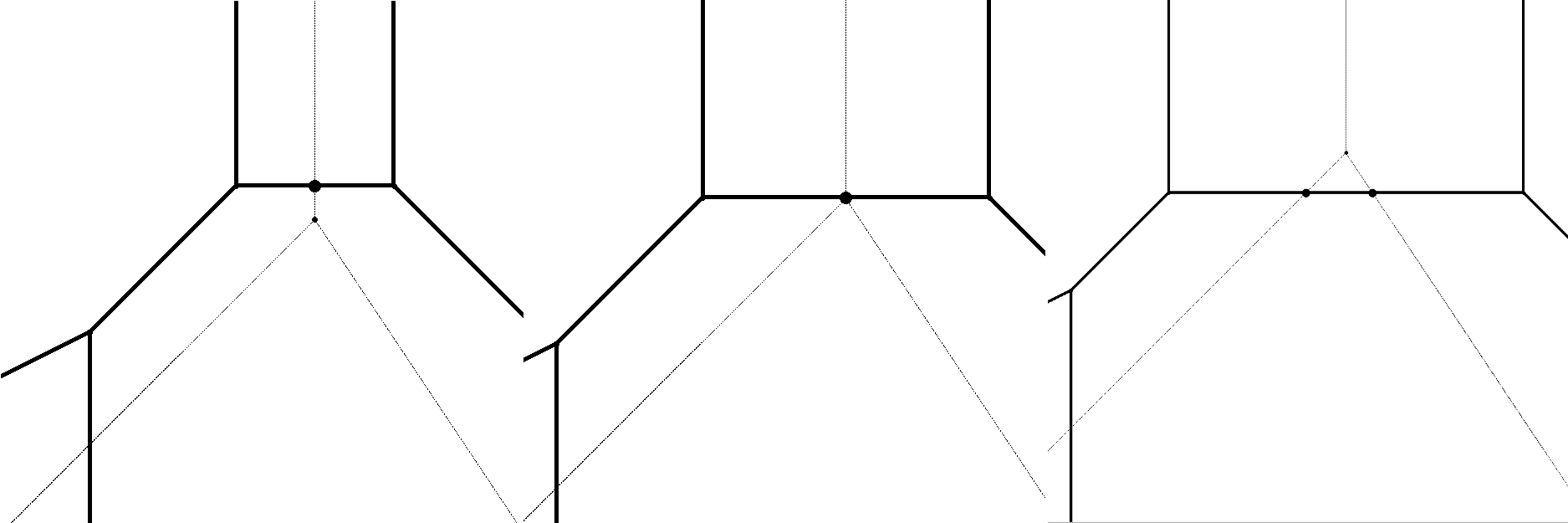}
 $-4v_1+v_2<0$ (left), $-4v_1+v_2=0$ (center), $-4v_1+v_2 > 0$ (right)
\end{center}
\caption{Cases of Example~\ref{ex:howDareformed}}\label{fig:howDareformed}
\end{figure}
Thus, we can take different values of $v, v' \in \R_{>0}^2$ such that, while
keeping $\Pi= \Pi_{p_v} = \Pi_{p_{v'}}$ invariant, it is not possible to find
singular points $q_v, q_{v'}$ in $\mathcal{T}(f_v), \mathcal{T}(f_{v'})$ for
which the flags coincide $F_\ell(q_v)=
F_\ell(q_{v'})$ for all $\ell$. 
\end{example}

It is worthwhile to note that the set of singular points in a hypersurface is
not, in general, a tropical variety. 

\begin{example}
Let $f=0\oplus 0\odot w_2 \oplus 0 \odot w_2^2 \oplus 0\odot w_1 \oplus 0 \odot
w_1\odot w_2 \oplus 1 \odot w_1^2$ represents a tropical conic. So, if it
is singular, it is a pair of lines. It happens that this conic is the union of
the lines $0\oplus 0\odot w_1\oplus 0\odot w_2$ and $0\oplus 1\odot w_1\oplus
0\odot w_2$. The intersection of these two lines is the ray $(0,0)+ p (1,0)$, $p
\geq 0$. This is not a tropical variety. However, any of its points is a valid
singular point of the conic. We can take any point $q$ in the intersection set
and lift the whole configuration, as it is an acyclic configuration (see
\cite{Transfer-trop}). In fact, for the point $(p,0)$, $p\geq 0$ we can take the
lift $F=(1 +x -(1+t^p)y) (1 +tx -(1 +t^{p+1})y) =1 +(1 +t)x +(-2 -t^p-t^{p+1})
y+ tx^2+( -1 -t -t^{ p+1}) xy+( 1+ t^p +t^{p+1} +t^{2p+1}) y^2$ that has a
singularity at $(t^p, 1)$.
\end{example}

\section{Tropical curves with non transversal
intersection}\label{sec:nontransversal}

We fix two finite subsets $A_1, A_2$ of $\mathbb{Z}^2$ with $|A_1|, |A_2|\ge 2$
and such that $\Z A_1 + \Z A_2 = \Z^2$. In this section we define and study non
transversal intersections of two tropical curves associated to tropical
polynomials with respective supports $A_1,A_2$. As in
Definition~\ref{def:tropical_singularity}, we will say that the intersection is
non transverse when it comes from the tropicalization of a classical non
transverse intersection of two curves. In this case, we will see that the
standard definitions do not have a straightforward translation to the tropical
setting.

\begin{definition}\label{def:nonstransversal_intersection}
Let $f=\bigoplus_{i\in A_1} p^1_i \odot w^i$, $g= \bigoplus_{i\in A_2} p^2_i
\odot w^i$ be two tropical polynomials in $\R[w_1, w_2]$. Let $q$ be a point in
the intersection of the tropical curves $\mathcal{T}(f)\cap \mathcal{T}(g)$.
Then $q$ is a \emph{non-transversal (or non-smooth) intersection point} of
$\mathcal{T}(f)$ and $\mathcal{T}(g)$ if there exists two Laurent polynomials
$F =\sum_{i\in A_1} a^1_i x^i, G = \sum_{i\in A_2} a^2_i x^i$ in
$\mathbb{K}[x_1,x_2]$, with respective supports $A_1, A_2$ and a point $b\in
(\mathbb{K}^*)^2$ which is a non-transversal intersection of $\{F=0\}$ and
$\{G=0\}$ such that $Trop(F)=f$ (that is, $val(a^1_i)=p^1_i$ for all $i \in
A_1$), $Trop(G) = g$ and $val(b) =q$. 
\end{definition}

Recall that $b \in \{F=0\} \cap \{G=0\}$ is a non-transversal intersection point
if moreover the Jacobian $J_{F,G}$ vanishes at $b$. This Jacobian is the
determinant of the Jacobian matrix (or the matrix of the differential of the map
$(F,G)$), and also of its transpose $M_{F,G} =
\begin{pmatrix}F_{x_1}(b)&G_{x_1}(b)\\F_{x_2}(b)&G_{x_2}(b)\end{pmatrix}$. The
condition that $J_{F,G}(b) =0$ is obtained from elimination of variables from
the following equivalent fact: $\{F=0\}$ and $\{G=0\}$ intersect non
transversally at $b$ if and only if their tangent lines coincide, or
equivalently, the matrix $M_{F,G}$ has a non trivial kernel, that is, there
exists a non trivial vector $(y_1, y_2)$ which is a solution of the system
\begin{equation} \label{eq:y1y2}
F_{x_1}(b) y_1 + G_{x_1}(b) y_2 \, =\, F_{x_2}(b) y_1 + G_{x_2}(b) y_2 \, = \,0.
\end{equation}

Given two Laurent polynomials $F =\sum_{i\in A_1} a^1_i x^i, G = \sum_{i\in A_2}
a^2_i x^i$ in $\mathbb{K}[x_1,x_2]$, with respective supports $A_1, A_2 \subset
\Z^2$, the {\em mixed discriminant} of $F$ and $G$ is the $A$-discriminant
associated to the polynomial $y_1F+y _2G\in \mathbb{K}[x_1, x_2, y_1, y_2]$ with
support in the Cayley configuration (cf. \cite{GKZ-book}) $$e_1 \times A_1 \cup
e_2 \times A_2 \subset \Z^4.$$ In fact, this is a three dimensional
configuration lying in the plane defined by the sum of the two first coordinates
equal to $1$. This mixed discriminant vanishes at the vectors of coefficients
$((a^1_i)_{i \in A_1}, (a^2_i)_{i\in A_2})$ whenever $F$ and $G$ have a
non-transversal intersection at a point $b \in (K^*)^2$ for which the system
\eqref{eq:y1y2} has a solution $(y_1, y_2) \in (K^*)^2$. In particular, note
that horizontal and vertical tangents are not necessarily reflected in the mixed
discriminant (cf.~\cite[Section~3]{esterov-discriminant-projections} for a more
general definition of discriminants which takes into account different
supports). Hence, we do not take cover these extremal cases and will only
describe the non-transversal intersection points for which the system
\eqref{eq:y1y2} has a solution $(y_1, y_2) \in (K^*)^2$.

\begin{lemma}\label{lem:l}
Let $f,g \in \R[w_1, w_2]$ be two tropical bivariate polynomials with respective
supports $A_1, A_2$. The tropical plane curves that $f$ and $g$ define intersect
non-transversally at an intersection point $q= (q_1, q_2) \in \R^2$ if and only
if there exists $l\in \mathbb{R}$ such that $\overline{q}=(q_1,q_2,l)$ belongs
to the tropical discriminant associated to the polynomial $f\oplus w_3\odot g
\in \R[w_1, w_2,w_3]$ with support in the configuration ${\mathcal C}(A_1, A_2)
= A_1 \times \{0\} \cup A_2 \times \{1\} \subset \Z^3$, that is if
$\overline{q}$ is a singular point of $\mathcal{T}(f\oplus w_3\odot g)$. 
\end{lemma}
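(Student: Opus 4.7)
The plan is to realize the non-transversal intersection as an ordinary singularity of the polynomial
$$H(x_1,x_2,y_3) \, := \, F(x_1,x_2) \, + \, y_3\, G(x_1,x_2),$$
viewed on the Cayley configuration $\mathcal{C}(A_1,A_2)$. A direct inspection shows that $H$ has support $\mathcal{C}(A_1,A_2)\subset\Z^3$, with coefficients $a^1_i$ at $(i,0)$ and $a^2_j$ at $(j,1)$; in particular $Trop(H) = f \oplus w_3 \odot g$. The four conditions
$$H \, = \, H_{x_1} \, = \, H_{x_2} \, = \, H_{y_3} \, = \, 0$$
for $(b_1,b_2,c) \in (\K^*)^3$ to be a singular point of $V(H)$ combine, after using $H_{y_3} = G$, into the single equivalent statement
$$F(b) \, = \, G(b) \, = \, 0, \qquad M_{F,G}(b) \cdot (1,c)^T \, = \, 0,$$
where $b = (b_1, b_2)$. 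This rewriting is the key link between the two sides of the lemma.

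For the $(\Leftarrow)$ direction, suppose $\overline{q} = (q_1, q_2, l) \in \mathcal{T}(f \oplus w_3 \odot g)$ is singular. By Definition~\ref{def:tropical_singularity} there exists a polynomial with support $\mathcal{C}(A_1, A_2)$ and prescribed coefficient valuations whose zero locus contains a singular point of valuation $\overline{q}$ in $(\K^*)^3$. Every such polynomial has the form $F + y_3 G$ with $Trop(F) = f$ and $Trop(G) = g$, obtained by splitting its monomials according to the exponent of $y_3$. The observation above then yields $b \in (\K^*)^2$ with $val(b) = q$, $F(b) = G(b) = 0$, and a kernel vector $(1,c) \in (\K^*)^2$ of $M_{F,G}(b)$, which matches Definition~\ref{def:nonstransversal_intersection} with solution $(y_1, y_2) = (1, c)$ of \eqref{eq:y1y2}.

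For the $(\Rightarrow)$ direction, let $q$ be a non-transversal intersection point, which under the running restriction (stated explicitly just before the lemma) is certified by a solution $(y_1, y_2) \in (\K^*)^2$ of \eqref{eq:y1y2}. Choose $F, G$ and $b$ as in Definition~\ref{def:nonstransversal_intersection}, and set $c := y_2/y_1 \in \K^*$ and $l := val(c)$. Rescaling the kernel vector shows $(1, c) \in \ker M_{F,G}(b)$, so the bi-directional observation above produces a singular point $(b_1, b_2, c) \in (\K^*)^3$ of $V(F + y_3 G)$ with valuation $(q_1, q_2, l) = \overline{q}$. Since $Trop(F + y_3 G) = f \oplus w_3 \odot g$, this certifies that $\overline{q}$ is a singular point of $\mathcal{T}(f \oplus w_3 \odot g)$.

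The only delicate point is the rescaling step $c = y_2 / y_1$, which requires $y_1 \in \K^*$; this is precisely the non-extremality hypothesis that excludes horizontal and vertical tangencies, and which is discussed immediately before the lemma. Once that reduction is in place, the proof is just a mechanical unpacking of Definitions~\ref{def:tropical_singularity} and~\ref{def:nonstransversal_intersection} applied to the Cayley-type polynomial $H = F + y_3 G$, so no further obstacle arises.
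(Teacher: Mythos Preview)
Your proof is correct and follows essentially the same approach as the paper's: both directions rest on the observation that a point $(b_1,b_2,c)\in(\K^*)^3$ is singular for $F+x_3G$ precisely when $F(b)=G(b)=0$ and $(1,c)$ lies in the kernel of $M_{F,G}(b)$, and then one lifts or projects via Definition~\ref{def:tropical_singularity}. The paper phrases the $(\Leftarrow)$ step through the incidence variety $Trop(H)$ rather than directly through Definition~\ref{def:tropical_singularity}, but this is only a cosmetic difference.
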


Note that the configurations ${\mathcal C}(A_1, A_2) \subset \Z^3 $ and the
Cayley configuration $e_1 \times A_1 \cup e_2 \times A_2 \subset \Z^4$ are
affinely equivalent. Therefore, the associated sparse discriminants coincide (up
to the names of the variables).

\begin{proof}
Suppose that there is an element $l\in \mathbb{R}$ such that $(q_1, q_2, l)$
belongs to the tropical discriminant of $f \oplus w_3 \odot g$. So, $(f \oplus
w_3 \odot g, (q_1, q_2, l))$ belongs to the incidence variety $Trop(H)$
associated to $A = \mathcal{C}(A_1, A_2)$. Then, there are algebraic
polynomials $F, G \in \K[x_1, x_2]$ and a point $(b_1, b_2, b_3) \in \K^3$ such
that $(F(x) + x_3 G(x),(b_1, b_2, b_3))$ lies in the incidence variety $H$,
$Trop(F)=f$, $Trop(G)=g$, $val(b_1, b_2, b_3)= (q_1, q_2, l)$. Hence $(b_1, b_2,
b_3)$ is a singular point of $F + x_3 G$, and so the partial derivatives must
vanish:
\[F_{x_1}(b_1, b_2) + b_3 G_{x_1}(b_1, b_2)=0,\ F_{x_2}(b_1, b_2) + b_3
G_{x_2}(b_1, b_2)=0, G(b_1, b_2)=0.\]
It follows that $F(b_1, b_2)=0$ and $b=(b_1, b_2)$ is a non-transversal
intersection point of $\{F=0\}$ and $\{G=0\}$. To prove the converse, suppose
$(q_1, q_2)$ is a non transversal intersection point of $f$ and $g$. There
exists $F,G \in \K[x_1, x_2]$ with respective supports $A_1, A_2$ such that
$Trop(F)=f, Trop(G) = g$ and $b=(b_1,b_2)$ as in
Definition~\ref{def:nonstransversal_intersection}. Then $F(b)=0$, $G(b)=0$, and
let $(y_1, y_2)$ be as in~\eqref{eq:y1y2}. It follows that $(b_1,b_2, y_2/y_1)$
is
a singular point of the surface defined by $F+ x_3G$, so $(F+x_3G,
(b_1,b_2,y_2/y_1))\in H$ and $(f\oplus w_3\odot g, (q_1,q_2, val(y_2/y_1))) \in
Trop(H)$. Therefore, $(q_1,q_2, val(y_2/y_1))$ is a non-transversal intersection
point of $f \oplus w_3 \odot g$.
\end{proof}

\begin{example} \label{ex:twolines}
Let $f=0\oplus 0 \odot w_1 \oplus 0 \odot w_2$ and $g=1 \oplus 0 \odot w_1
\oplus 0 \odot w_2$ be two tropical lines. These lines intersect at an infinite
number of points (all the points in the ray $\{w_1=w_2 \le 0\})$. However, two
algebraic lines intersect non-transversally if and only if they are the same,
and we expect that this also happens in the tropical setting. Since $f\neq g$,
let us check that they intersect transversally according to our definition.
Consider the surface defined by $f\oplus w_3\odot g= 0 \oplus 0 \odot w_1 \oplus
0 \odot w_2 \oplus 1 \odot w_3 \oplus 0 \odot w_1 \odot w_3 \oplus 0 \odot w_2
\odot w_3$. Both lines have the same support $A_1 = A_2 = \{ (0,0), (1,0),
(0,1)\}$. The associated mixed subdivision of ${\mathcal C}(A_1, A_2) =
\{(0,0,0), (1,0,0), (0,1,0), (0,0,1), (1,0,1), (0,1,1)\}$ is precisely the
marked subdivision occurring in Example~\ref{ex:nontriang2} and depicted in
Figure~\ref{fig:cayley2rectas}, so $f\oplus w_3\odot g$ is indeed non singular
according to Lemma~\ref{lem:l}.
\end{example}

We now revisit Example~\ref{ex:defective}.

\begin{example}
Let $A_1 = \{ (0,0), (1,0), (2,0)\}$, $A_2 = \{(0,0), (0,1), (0,2) \}$. So, $f$
and $g$ are in fact univariate polynomials in different variables. The
associated configuration ${\mathcal C}(A_1,A_2)$ is just the configuration $A$
occurring in Example~\ref{ex:defective}. In this case, the mixed discriminant
has codimension bigger than $1$ and a point of intersection $q= (q_1, q_2)$ is
non-transversal if and only if $q_1$ is a singular point of $\mathcal{T} (p_1
\oplus p_2 \odot w_1 \oplus p_3 \odot w_1^2)$ and $q_2$ is a singular point of
$\mathcal{T} (p_4 \oplus p_5 \odot w_2 \oplus p_6 \odot w_2^2)$, which is the
translation of the fact that $q$ is singular if and only if it lies in the
rowspan of $A$.
\end{example}

Since we are looking for the singular points $(q_1,q_2,l)$ in ${\mathcal
T}(f\oplus w_3\odot g)$, we could use the known tropical basis of the
discriminant of this surface to compute them, as in the discussion above.
However, we would like to obtain a method that involves only $f$ and $g$ and no
new variables. Just checking if the Jacobian matrix is singular will not work.

\begin{example}
Let $f=0\oplus 0 \odot w_1\oplus 0 \odot w_2\oplus 0\odot w_1 \odot w_2 \oplus 0
\odot w_2^2 \oplus 1 \odot w_1^2$, $g=0 \oplus 1 \odot w_1 \oplus 0 \odot w_2$.
Consider the intersection point $q=(0,0)$. If we would like to use a kind of
tropical Jacobian matrix, a natural choice would be the matrix
{\tiny{
\[\begin{pmatrix}\frac{\partial f}{\partial \{w_1=0\}} & \frac{\partial
f}{\partial \{w_2=0\}}\\
\frac{\partial g}{\partial \{w_1=0\}} & \frac{\partial g}{\partial
\{w_2=0\}}\end{pmatrix}
=
\begin{pmatrix} 0 \odot w_1 \oplus 0 \odot w_1 \odot w_2 \oplus 1 \odot w_1^2 &
0 \odot w_2\oplus 
0 \odot w_1 \odot w_2 \oplus 0 \odot w_2^2\\ 1 \odot w_1 &
0 \odot w_2 \end{pmatrix}.\]}}
When we evaluate at $q$, we get the matrix
$\begin{pmatrix}0& 0\\ 1 & 0\end{pmatrix}$
which is nonsingular because $0 \odot 0 \not = 0 \odot 1$.
However, take $F=(1-t) + 2x + 2y + 2 x y + y^{2} +x^{2} t$, $G=(1+t) + tx + y$,
$b=(-1,-1)$ is an intersection point and $Trop(F)=f$, $Trop(G)=g$,
$val(b)=(0,0)$ and
\[(1,2) \begin{pmatrix}F_x(b) & F_y(b) \\ G_x(b) & G_y(b)\end{pmatrix}=(1,2)
\begin{pmatrix} -2t& -2 \\ t& 1\end{pmatrix} = (0,0)\]
So the tropical point $(0,0)$ is a non-transversal intersection point.
The problem is that in general, given a polynomial $J \in \K[x_1, \dots, x_n]$
and a point $b \in (\K^*)^n$, $Trop(J) (val(b)) \not = val (J(b))$. In our case
$J$ is the Jacobian $J_{FG} = 2-2t+ y (2-2t)$ and $b=(-1,-1)$. We have that
$Trop(J_{FG})(val(b)) = (0 \oplus 0 \odot w_2)(0,0) = 0$, while $val(J_{FG})(b)
= \infty$. 
\end{example}

We now show another phenomenon that occurs.

\begin{example}
Let $F = G = 1 + x_1 + x_2$. $f =Trop(F)$ and $g=Trop(G)$ equal $0\oplus 0 \odot
w_1\oplus 0 \odot w_2$ and intersect non transversally at any point of
$\mathcal{T}(f) = \mathcal{T}(g)$. The toric Jacobian $x_1 x_2 J_{FG}$ is the
determinant of the $2 \times 2$ matrix $M$ with columns the Euler derivatives of
$F$ and $G$ with respect to $L=w_1, w_2$, and so it is identically zero. If we
consider the matrix $Trop(M)$ obtained by taking the tropicalization of each of
the entries of $M$, we get the $2 \times 2$ matrix with two equal rows $[0\odot
w_1 \, \, 0 \odot w_2]$. The tropical determinant of $Trop(M)$ equals the
permanent $(0 \odot w_1) \odot (0 \odot w_2) \oplus (0 \odot w_1) \odot (0 \odot
w_2)$. If we forget the fact that we have {\em twice} the term $(0 \odot w_1)
\odot (0 \odot w_2) = 0 \odot w_1 \odot w_2$, we lose information. We just get a
monomial, which defines an empty tropical curve.
\end{example}

We deal now with two easy cases.

\begin{proposition}\label{teo:non_transversal_intersection_two_vertices}
Let $f$, $g$ be two tropical bivariate polynomials with respective supports
$A_1, A_2$. Let $q=(q_1, q_2)$ be a tropical point that is a vertex in both
$\mathcal{T}(f)$ and $\mathcal{T}(g)$. Then $q$ is a non-transversal
intersection point of $f$ and $g$.
\end{proposition}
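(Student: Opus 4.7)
The plan is to invoke Lemma~\ref{lem:l}: I will exhibit an $l\in\R$ such that $\overline{q}:=(q_1,q_2,l)$ is a singular point of the tropical surface $\mathcal{T}(h)$, where $h:=f\oplus w_3\odot g$ has support in the configuration $\mathcal{C}(A_1,A_2)\subset\Z^3$. I expect $\overline{q}$ to be a vertex of $\mathcal{T}(h)$, so singularity will reduce via Theorem~\ref{teo:char_punto_singular} to the purely combinatorial statement that its dual cell is not a pyramid.

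The natural choice is $l:=f(q)-g(q)$, which balances the two pieces of $h$ at $\overline{q}$ so that the minima coming from $A_1\times\{0\}$ and $A_2\times\{1\}$ coincide. Let $\sigma_1\subseteq A_1$ be the (marked) dual cell in $\Pi_{p^1}$ of the vertex $q$ of $\mathcal{T}(f)$, and define $\sigma_2\subseteq A_2$ analogously. Because $q$ is a vertex of each curve, $\sigma_1$ and $\sigma_2$ are both $2$-dimensional cells. By our choice of $l$, the minimum defining $h(\overline{q})$ is attained exactly on $\sigma:=(\sigma_1\times\{0\})\cup(\sigma_2\times\{1\})$. Since $\sigma_1$ and $\sigma_2$ are $2$-dimensional and live in the parallel hyperplanes $\{w_3=0\}$ and $\{w_3=1\}$ of $\R^3$, the configuration $\sigma$ is $3$-dimensional, so $\overline{q}$ is a vertex of $\mathcal{T}(h)$ with dual cell exactly $\sigma$.

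The combinatorial step is then to check that $\sigma$ is not a pyramid. Suppose, for contradiction, that removing one point from $\sigma$ leaves a configuration lying in an affine $2$-plane $H\subset\R^3$. Without loss of generality the removed point belongs to $\sigma_1\times\{0\}$, so $H$ still contains all of $\sigma_2\times\{1\}$ together with a nonempty subset of $\sigma_1\times\{0\}$. Then $H$ meets both $\{w_3=0\}$ and $\{w_3=1\}$, intersecting them in parallel lines; but this forces $\sigma_2$ to be collinear, contradicting $\dim\langle\sigma_2\rangle=2$. The case in which the removed point lies in $\sigma_2\times\{1\}$ is symmetric. Thus $\sigma$ is not a pyramid and Theorem~\ref{teo:char_punto_singular} gives that $\overline{q}$ is singular in $\mathcal{T}(h)$, whence Lemma~\ref{lem:l} yields that $q$ is a non-transversal intersection point of $f$ and $g$.

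The main subtlety is not the pyramid check itself but ensuring that the dual cell of $\overline{q}$ in the marked subdivision induced by $h$ is exactly $\sigma$ — i.e.\ that no lattice point of $A_1\setminus\sigma_1$ or of $A_2\setminus\sigma_2$ sneaks in and contributes to the minimum at $\overline{q}$. This is immediate from the defining property of $\sigma_1$ and $\sigma_2$ as the \emph{full} sets on which $f(q)$ and $g(q)$ are attained, combined with the balancing choice $l=f(q)-g(q)$.
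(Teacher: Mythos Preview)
Your proof is correct and follows essentially the same approach as the paper: both set $l=f(q)-g(q)$, identify the dual cell of $\overline{q}$ as $(\sigma_1\times\{0\})\cup(\sigma_2\times\{1\})$ with $\sigma_1,\sigma_2$ two-dimensional, and then exploit this combinatorial structure. The only difference is cosmetic: you invoke the vertex case of Theorem~\ref{teo:char_punto_singular} by checking directly that $\sigma$ is not a pyramid, whereas the paper verifies by hand that $\overline{q}$ lies in every $\mathcal{T}(\partial(f\oplus w_3\odot g)/\partial L')$ and then applies Theorem~\ref{teo:singular_point_by_derivatives}; the two case splits (your parallel-lines argument versus the paper's dichotomy on whether $L'$ depends on $w_1,w_2$) encode the same observation.
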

\begin{proof}
Set $l:=f(q)-g(q)\in \mathbb{R}$. We claim that $\overline{q}=(q_1,q_2,l)$ is a
singular point of the tropical surface $\mathcal{T}(f\oplus w_3\odot g)$. To see
this, note that the points in ${\mathcal C}(A_1, A_2)$ where the minimum in
$\overline{q}$ is attained are of the form $(i,0) \in A_1 \times 0$, for all $i$
such that $f(q)$ attains its minimum and those of the form $(i,1) \in A_2 \times
1$, for all $i$ where $g(q)$ attains its minimum. Since $q$ is a vertex of both
$\mathcal{T}(f)$ and $\mathcal{T}(g)$, the minimum of $f(q)$ is attained in a
$2$-dimensional cell $\sigma_1$ in $A_1$ and the minimum of $g(q)$ is
attained in a $2$-dimensional cell $\sigma_2$ in $A_2$. Let $L=j_1 w_1 +j_2 w_2
+j_3 w_3 +\b$ be any integer affine function in three variables, $L\neq
j_3w_3+\b$. Then $\{L=0\}$ cannot contain $\sigma_1$ nor $\sigma_2$, so there
are at least two different points of ${\mathcal C}(A_1, A_2)$ where the minimum
of $f\oplus w_3\odot g$ is attained at $\overline{q}$. It follows that
$\overline{q} \in \mathcal{T}\left( \frac{\partial (f\oplus w_3\odot
g)}{\partial L}\right)$. In case $L$ is of the form $j_3w_3+\b$, $\{L=0\}$ is
disjoint from ${\mathcal C}(A_1, A_2)$ unless $\b= -j_3-1,0$, and in this case
either all the monomials corresponding to the points in $A_1 \times 0$ or $ A_2
\times 1$ occur in the Euler derivative with respect to $L$ of $f+ w_3 \odot g$.
So, by Theorem~\ref{teo:singular_point_by_derivatives}, $\overline{q}$ is a
singular point of $f\oplus w_3\odot g$ and $q$ is a nontransversal intersection
point of $f$ and $g$.
\end{proof}

\begin{proposition}\label{
teo:non_transversal_intersection_one-vertex-and-segment}
Let $f,g$ be tropical polynomials with respective supports $A_1, A_2$. Suppose
that $q$ is a vertex of $\mathcal{T}(f)$ and lies on a segment or ray of
$\mathcal{T}(g)$. Moreover, suppose that, locally, $p$ is the only local
intersection point in $\mathcal{T}(f)\cap \mathcal{T}(g)$. Then $q$ is a
non-transversal intersection point of $f$ and $g$. 
\end{proposition}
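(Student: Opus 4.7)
The plan is to apply Lemma~\ref{lem:l}, which reduces the non-transversal intersection of $f$ and $g$ at $q$ to the singularity in $\mathcal{T}(F)$ of an auxiliary point $\overline{q} = (q_1, q_2, l) \in \R^3$, where $F := f \oplus w_3 \odot g$ is viewed as a tropical polynomial with support in $\mathcal{C}(A_1, A_2) \subset \Z^3$. The natural choice is $l := f(q) - g(q)$. A direct calculation then shows that the minimum of $F$ at $\overline{q}$ is attained precisely on
\[
\sigma \;=\; (\sigma_1 \times \{0\}) \cup (\sigma_2 \times \{1\}),
\]
where $\sigma_1 \subseteq A_1$ is the dual cell of the vertex $q$ in $\Pi_{p^1}$ (so $\dim\langle\sigma_1\rangle = 2$ and $|\sigma_1| \geq 3$) and $\sigma_2 \subseteq A_2$ is the dual cell of the edge of $\mathcal{T}(g)$ through $q$ in $\Pi_{p^2}$ (so $\dim\langle\sigma_2\rangle = 1$ and $|\sigma_2| \geq 2$). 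Since $\sigma$ affinely spans $\R^3$, the point $\overline{q}$ is a vertex of $\mathcal{T}(F)$ with dual cell $\sigma$, and by the vertex case of Theorem~\ref{teo:char_punto_singular} it is enough to show that $\sigma$ is not a pyramid.

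Next I would carry out a case analysis on an affine plane $H \subset \R^3$ to verify that $|\sigma \setminus H| \neq 1$. If $H = \{w_3 = c\}$ is horizontal, one of $\sigma_1 \times \{0\}$ or $\sigma_2 \times \{1\}$ lies entirely outside $H$ and has at least two elements. Otherwise $H$ meets $\{w_3 = 0\}$ and $\{w_3 = 1\}$ in parallel lines $\ell_0$ and $\ell_1$. Because $\dim\langle\sigma_1\rangle = 2$, we always have $\sigma_1 \not\subseteq \ell_0$ and hence $|\sigma_1 \setminus \ell_0| \geq 1$; provided also that $\sigma_2 \not\subseteq \ell_1$, we get $|\sigma_2 \setminus \ell_1| \geq 1$ and hence $|\sigma \setminus H| \geq 2$, as required.

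The delicate case, where the local-uniqueness hypothesis enters, is $\sigma_2 \subseteq \ell_1$: then $\ell_0$ is parallel to the line containing $\sigma_2$. If $|\sigma_1 \setminus \ell_0| = 1$, then at least two points of $\sigma_1$ lie on $\ell_0$, and taking two such points that are extreme along $\ell_0$ yields a boundary edge of $\mathrm{conv}(\sigma_1)$ parallel to $\sigma_2$, because the single point of $\sigma_1$ off $\ell_0$ sits strictly on one side of this line. This boundary edge of the maximal cell $\sigma_1$ of $\Pi_{p^1}$ is dual to a one-cell of $\mathcal{T}(f)$ emanating from $q$ whose direction is perpendicular to $\sigma_2$, hence parallel to $\sigma_2^*$. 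Since $q$ lies in the relative interior of $\sigma_2^*$, the two one-cells coincide in a neighborhood of $q$, producing intersection points of $\mathcal{T}(f)$ and $\mathcal{T}(g)$ arbitrarily close to $q$ but distinct from $q$, contradicting the hypothesis.

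I expect the last step to be the main obstacle: justifying rigorously that the two collinear points of $\sigma_1$ on $\ell_0$ define a genuine boundary edge of $\mathrm{conv}(\sigma_1)$ (and not merely a chord through the interior), and that this edge really dualizes to a one-cell of $\mathcal{T}(f)$ through $q$ whose germ overlaps with that of $\sigma_2^*$. Once these points are settled, the remaining arguments are mechanical and the statement follows from Theorem~\ref{teo:char_punto_singular} and Lemma~\ref{lem:l}.
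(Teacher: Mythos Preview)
Your approach coincides with the paper's: set $l=f(q)-g(q)$, observe that the dual cell of $\overline q=(q_1,q_2,l)$ in $\mathcal T(f\oplus w_3\odot g)$ is $\sigma=(\sigma_1\times\{0\})\cup(\sigma_2\times\{1\})$, and conclude via the vertex case of Theorem~\ref{teo:char_punto_singular} by showing that $\sigma$ is not a pyramid. The paper executes this last step by exhibiting a five-point circuit of dimension~$3$ inside $\sigma$ (three non-collinear points of $\sigma_1\times\{0\}$ together with two points of $\sigma_2\times\{1\}$), whereas you run a direct hyperplane case analysis; the two arguments are equivalent and use the local-uniqueness hypothesis at exactly the same place.

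Your anticipated obstacle is not one. When $|\sigma_1\setminus\ell_0|=1$, the line $\ell_0$ is a supporting line of $\mathrm{conv}(\sigma_1)$ (the unique point off $\ell_0$ lies strictly on one side), so $\ell_0\cap\mathrm{conv}(\sigma_1)=\mathrm{conv}(\sigma_1\cap\ell_0)$ is automatically a boundary edge, not a chord. Every edge of the maximal polygon $\mathrm{conv}(\sigma_1)$ is a cell of $\Pi_{p^1}$ and is therefore dual to a one-cell of $\mathcal T(f)$ emanating from $q$ in the direction of its outward normal; since that normal is orthogonal to $\ell_0$ and hence to $\langle\sigma_2\rangle$, this one-cell is parallel to $\sigma_2^*$, and because $q$ lies in the relative interior of $\sigma_2^*$ the two germs overlap, yielding the desired contradiction.
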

\begin{proof}
The minimum of $f$ at $q$ is attained in at least three non-collinear monomials
associated to the points $\{\alpha_1, \alpha_2, \alpha_3\}$ and the minimum in
$g$ is attained in at least $2$ monomials associated to the points $\{\beta_1,
\beta_2\}$. Take again $l:= f(q) - g(q)$. Then, $(q_1,q_2, l)$ of $f\oplus w_3
\odot g$ is attained at the monomials corresponding to $C=\{(\alpha_1,0),
(\alpha_2,0), (\alpha_3,0), (\beta_1,1), (\beta_2,1)\}$. By construction,
$\{(\alpha_1,0), (\alpha_2,0), (\alpha_3,0)$, $(\beta_1,1)\}$ is a simplex.
Since the intersection of $\mathcal{T}(f)$ and $\mathcal{T}(q)$ around $q$ is
just the point $q$, it follows that the line through $(\beta_1,1)$ and
$(\beta_2,1)$ is not coplanar with any line generated by two different points
among $\{(\alpha_1,0), (\alpha_2,0), (\alpha_3,0)\}$. In other words, $C$ is a
circuit of dimension $3$. By Theorem~\ref{teo:char_punto_singular}, $(q_1, q_2,
l)$ is a singular point of $f\oplus w_3\odot g$ and thus $q$ is a
non-transversal intersection point of $f$ and $g$. 
\end{proof}

In order to deal with the general case, we introduce some notation. Let $L' =
j_1 w_1 + j_2 w_2 +c w_3 + \b$ be an integer affine function in $3$ variables.
Call $L= j_1w_1 +j_2w_2+\b \in \R[w_2, w_2]$, so that 
\begin{equation}\label{eq:L'}
L'=L+cw_3.
\end{equation}
We clearly have:
\begin{equation} \label{eq:partial}
\frac{\partial (f\oplus w_3\odot g)}{\partial L'}=\frac{\partial f}{\partial
L} \oplus w_3\odot \frac{\partial g}{\partial (L+c)}. 
\end{equation}

\begin{lemma}\label{lem:4types}
Let $q \in \R^2$ be an intersection point of $\mathcal{T}(f)$ and ${\mathcal
T}(g)$ and let $L'$ be an integer affine function in $3$ variables. There exists
$l \in \R$ such that $\overline{q}=(q_1,q_2,l)$ is a point in the tropical
surface $\mathcal{T}(\frac{\partial (f\oplus w_3 \odot g)}{\partial L'})$.
\end{lemma}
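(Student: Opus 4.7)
The plan is to apply identity~\eqref{eq:partial} to decompose
\[
P \;:=\; \frac{\partial (f \oplus w_3 \odot g)}{\partial L'} \;=\; \tilde f \;\oplus\; w_3 \odot \tilde g,
\]
where $\tilde f := \partial f/\partial L$ and $\tilde g := \partial g/\partial (L+c)$, and then to pick $l\in\R$ according to how the tropical minima of $\tilde f$ and $\tilde g$ at $q$ are realised. Evaluating gives $P(\overline q)=\min\bigl(\tilde f(q),\,l+\tilde g(q)\bigr)$, and the condition $\overline q\in\mathcal T(P)$ amounts to requiring this minimum be attained at at least two monomials of $P$, with contributions allowed from either the $A_1\times\{0\}$ side (via $\tilde f$) or the $A_2\times\{1\}$ side (via $\tilde g$), or one from each.

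Let $N_1$ denote the number of monomials of $\tilde f$ where $\tilde f(q)$ is attained, with the convention $N_1=0$ if $\tilde f$ has empty support, and define $N_2$ analogously. The argument then splits into the four types that explain the label of the lemma. In Type~(I), when $N_1\ge 2$, I pick $l$ large enough that $\tilde f(q)\le l+\tilde g(q)$ (any $l$ works when $\tilde g$ is empty); then $P(\overline q)=\tilde f(q)$ is attained at $N_1\ge 2$ monomials on the $A_1$-side. Type~(II), $N_2\ge 2$, is dual: take $l$ sufficiently small. In Type~(III), $N_1=N_2=1$, I set $l=\tilde f(q)-\tilde g(q)$ so that the two sides of the tropical sum coincide, and the minimum is attained once on each side, giving two monomials in total.

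The remaining Type~(IV) is the degenerate case where one of $N_1,N_2$ vanishes, say $N_2=0$; then $\tilde g$ has empty support, i.e.\ $A_2\subseteq\{L+c=0\}$. The plan is to leverage the standing hypothesis $\Z A_1+\Z A_2=\Z^2$ together with $|A_2|\ge 2$ and the nontriviality of $L'$ to fall back into one of Types~(I)--(III). When $A_2$ affinely spans $\R^2$, $L$ must be the constant $-c$ with $c\ne 0$ (else $L'=0$), so $L(i)=-c\ne 0$ for every $i\in A_1$, giving $\tilde f=f$ and $N_1=|I_1|\ge 2$, thereby reducing to Type~(I). The remaining sub-case, in which $A_2$ lies on a true affine line, is handled by combining $\Z A_1+\Z A_2=\Z^2$ with $q\in\mathcal T(f)\cap\mathcal T(g)$ to locate a second minimizer of $\tilde f$ (or of $\tilde g$) at $q$, again reducing to a previous type.

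Types~(I)--(III) are routine; the main obstacle is the dimension count in Type~(IV), where one has to work carefully with the affine-span hypothesis on $A_1\cup A_2$ to rule out the pathological configurations in which the support of $P$ is too small to permit any valid choice of $l$.
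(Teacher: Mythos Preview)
The paper's proof is a single line: take $l=\tilde f(q)-\tilde g(q)$. With this choice the two halves of $P(\overline q)=\min\bigl(\tilde f(q),\,l+\tilde g(q)\bigr)$ coincide, so the minimum is achieved by at least one monomial on the $A_1$-side and at least one on the $A_2$-side, which already gives two witnesses. Your Types~(I)--(III) are correct, and in fact they reproduce (with a slightly different partition) the four ``types'' that the paper introduces \emph{after} the lemma in order to describe the full set of admissible~$l$'s; for the lemma itself this case split is not needed.

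Your Type~(IV), however, has a genuine gap. The second sub-case, where $N_2=0$ and $A_2$ lies on an affine line, cannot be completed as you sketch, because the conclusion of the lemma is actually \emph{false} there. Take $A_1=\{(0,0),(1,0),(0,1)\}$, $A_2=\{(0,0),(0,1)\}$, all tropical coefficients equal to~$0$, and $q=(1,0)\in\mathcal T(f)\cap\mathcal T(g)$. With $L'=w_1$ (so $L=w_1$, $c=0$) one has $A_2\subset\{L+c=0\}$, hence $\tilde g$ has empty support, while $\tilde f=0\odot w_1$ consists of a single monomial. Thus $P=0\odot w_1$ and $\mathcal T(P)=\emptyset$, so no $l\in\R$ works. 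There is no ``second minimizer'' to locate.

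The resolution is that the paper's one-line proof tacitly assumes both $\tilde f$ and $\tilde g$ are nonempty (otherwise $l=\tilde f(q)-\tilde g(q)$ is not a real number), and the ensuing classification into types~1--4 is made under the same tacit assumption. So your Type~(IV) is not a missing case you must fill in; it is a degeneracy outside the intended scope of the lemma, and your attempted argument for it cannot be rescued within the literal statement.
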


\begin{proof}
Define $L$ as in \eqref{eq:L'}. It is enough to take $l = \frac{\partial
f}{\partial L}(q)-\frac{\partial g}{\partial (L+c)}(q)$.
\end{proof}

In fact, keeping the notations of the previous Lemma, we can make a finer
classification of the affine linear forms $L'$ into $4$ types and describe all
possible choices of $l$ in each case. We say that $L'$ is of type 1 if the
minimum of $\frac{\partial f}{\partial L}$ at $q$ is attained at least twice and
exactly once in $\frac{\partial g}{\partial (L+c)}$. In this case, we can take
any $l \ge \frac{\partial f}{\partial L}(q)-\frac{\partial g}{\partial
(L+c)}(q)$. We say that $L'$ is of type 2 if instead the minimum at $q$ is
attained at least twice in $\frac{\partial g}{\partial L +c}$ and once in
$\frac{\partial f}{\partial L}$. In this case, we can take any $l \le
\frac{\partial f}{\partial L}(q)-\frac{\partial g}{\partial (L+c)}(q)$. The
integer affine linear function $L'$ is said to be of type 3 if the minimum is
attained at least twice in both $\frac{\partial f}{\partial L}$ and
$\frac{\partial g}{\partial (L+c)}$. In this case, we can take any $l\in \R$.
Finally, we say that $L'$ is of type 4 if the minimum is attained once in both
$\frac{\partial f}{\partial L}$ and $\frac{\partial g}{\partial (L+c)}$. In this
case, the only possible choice is $l = \frac{\partial f}{\partial
L}(q)-\frac{\partial g}{\partial (L+c)}(q)$.

We now present the main result in this section.

\begin{theorem}\label{teo:non_transversal_intersection_nontrivial_case}
Let $f$, $g$ be two tropical bivariate polynomials with respective supports
$A_1, A_2$ and $q$ an intersection point of $\mathcal{T}(f)$ and ${\mathcal
T}(g)$. Consider the set ${\mathcal A}(q) =\{(L_1, L_2, c_1, c_2)\}$ of all
$4$-tuples where $L_1, L_2$ are integer affine functions in two variables and
$c_1,c_2\in \mathbb{R}$, for which $q$ belongs to at {\em least one} of the
tropical curves associated to $\frac{\partial f}{\partial L_1}$, $\frac{\partial
f}{\partial L_2}$, $\frac{\partial g}{\partial (L_1+c_1)}$, $\frac{\partial
g}{\partial (L_2+c_2)}$. Let ${\mathcal B}(q)=\{(L_1,L_2,c_1,c_2)\}$ be the set
of
$4$-tuples for which $q$ does {\em not} belong to any of these four tropical
curves. 

Then, $q$ is a non-transversal intersection point of $f$ and $g$ if and only if
\begin{itemize}
\item For all $(L_1,L_2,c_1,c_2)\in \mathcal{A}(q)$, $q$ belongs to the
tropical curve associated to the tropical polynomial
\[\frac{\partial f}{\partial L_1}\odot \frac{\partial{g}}{\partial (L_2+c_2)}
\oplus \frac{\partial f}{\partial L_2}\odot \frac{\partial g}{\partial
(L_1+c_1)}.\]
\item The following equalities hold for all $(L_1,L_2,c_1,c_2) \in
\mathcal{B}(q)$:
\[\frac{\partial f}{\partial L_1}\odot \frac{\partial{g}}{\partial (L_2+c_2)}(q)
=
\frac{\partial f}{\partial L_2}\odot \frac{\partial g}{\partial (L_1+c_1)}(q).\]
\end{itemize}
It is enough to check these conditions for a finite number of $4$-tuples.
\end{theorem}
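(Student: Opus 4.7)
The plan is to apply Lemma~\ref{lem:l} to translate the statement into: $q$ is a non-transversal intersection of $f,g$ if and only if there exists $l\in\mathbb{R}$ such that $\overline{q}=(q_1,q_2,l)$ is a singular point of the tropical surface $\mathcal{T}(f\oplus w_3\odot g)$, whose support $\mathcal{C}(A_1,A_2)$ lies in $\mathbb{Z}^3$. By Theorem~\ref{teo:singular_point_by_derivatives} combined with Proposition~\ref{prop:tbH}, this is equivalent to demanding that $\overline{q}\in\mathcal{T}\bigl(\tfrac{\partial (f\oplus w_3\odot g)}{\partial L'}\bigr)$ for every $L'$ in the finite set $\mathcal{L}$ attached to $\mathcal{C}(A_1,A_2)$. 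Writing any such $L'$ as $L+cw_3$ via~\eqref{eq:L'} and expanding via~\eqref{eq:partial}, every condition takes the form
\[\overline{q}\;\in\;\mathcal{T}\Bigl(\tfrac{\partial f}{\partial L}\oplus w_3\odot \tfrac{\partial g}{\partial (L+c)}\Bigr).\]
The whole problem thus reduces to deciding when this family of conditions on $l$ admits a common solution.

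For a fixed pair $(L,c)$, set $\alpha=\tfrac{\partial f}{\partial L}(q)$, $\beta=\tfrac{\partial g}{\partial (L+c)}(q)$ and $\Delta(L,c):=\alpha-\beta$. Denote by $P_f(L)$ the property $q\in\mathcal{T}(\tfrac{\partial f}{\partial L})$, and by $P_g(L,c)$ the analogous property for $g$. A direct inspection of where the tropical minimum of $\alpha$ and $l+\beta$ is attained shows that the set of admissible $l$ equals $\mathbb{R}$ when both $P_f(L)$ and $P_g(L,c)$ hold, $[\Delta(L,c),\infty)$ when only $P_f(L)$ does, $(-\infty,\Delta(L,c)]$ when only $P_g(L,c)$ does, and the single point $\{\Delta(L,c)\}$ when neither holds. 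These correspond, respectively, to the four types of $L'$ distinguished just before the statement.

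The intersection of all these admissible sets is nonempty if and only if every lower bound $\Delta(L_1,c_1)$ (imposed whenever $\neg P_g(L_1,c_1)$) is at most every upper bound $\Delta(L_2,c_2)$ (imposed whenever $\neg P_f(L_2)$). Rearranging $\Delta(L_1,c_1)\le\Delta(L_2,c_2)$ yields exactly $P_1(q)\le P_2(q)$, where $P_1=\tfrac{\partial f}{\partial L_1}\odot\tfrac{\partial g}{\partial(L_2+c_2)}$ and $P_2=\tfrac{\partial f}{\partial L_2}\odot\tfrac{\partial g}{\partial(L_1+c_1)}$ are the two tropical products appearing in the statement. Hence the existence of $l$ is equivalent to the pairwise condition: for every ordered pair $(L_1,c_1),(L_2,c_2)$ with $\neg P_g(L_1,c_1)$ and $\neg P_f(L_2)$, one has $P_1(q)\le P_2(q)$.

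Finally I will match this compatibility with the two bullets. For a tuple in $\mathcal{B}(q)$ all four negations hold, and the swap $(L_1,c_1)\leftrightarrow (L_2,c_2)$ forces both inequalities simultaneously, yielding the equality $P_1(q)=P_2(q)$ of the second bullet. For a tuple in $\mathcal{A}(q)$ I will use the elementary fact that $q\in\mathcal{T}(P_i)$ holds if and only if $P_f(L_i)\vee P_g(L_{3-i},c_{3-i})$ to verify that $q\in\mathcal{T}(P_1\oplus P_2)$ captures exactly those pairwise compatibility inequalities that the tuple can impose. Finiteness of the check follows from the finiteness of $\mathcal{L}$ and the fact that, with $A_1,A_2$ finite, only finitely many Euler derivatives occur. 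I expect the main obstacle to be combinatorial bookkeeping in this last step: verifying that the asymmetric pairwise compatibility on admissible intervals is faithfully reflected by the symmetric condition on 4-tuples, with the $\mathcal{A}(q)/\mathcal{B}(q)$ split playing exactly the role of separating ``interval-meets-interval'' from ``point-meets-point'' comparisons.
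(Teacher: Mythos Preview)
Your proposal is correct and follows essentially the same route as the paper's proof: reduce via Lemma~\ref{lem:l} and Theorem~\ref{teo:singular_point_by_derivatives} to the existence of a common $l$, describe for each $L'=L+cw_3$ the set of admissible $l$ as an interval determined by the four ``types'', and then translate the Helly-type compatibility of these intervals into the pairwise Jacobian conditions. Your interval formulation (lower bound imposed iff $\neg P_g$, upper bound imposed iff $\neg P_f$) is a slightly cleaner packaging of what the paper does by introducing $l_1,l_2,l_4(L'_i)$ explicitly and then running the same case analysis on pairs of types; the final bookkeeping you anticipate is exactly the paper's case-by-case verification.
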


\begin{proof}
By Theorem~\ref{teo:singular_point_by_derivatives}, a point $q \in \R^2$ is a
non-transversal intersection point if and only if there is an $l$ such that
$(q_1, q_2, l)$ belongs to all the partial derivatives $\frac{\partial (f \oplus
w_3\odot g)}{\partial L'}$. Assume first that $q\in \R^2$ is a non-transversal
intersection point and let $l\in \mathbb{R}$ such that $(q_1, q_2, l)$ belongs
to all the partial derivatives $\frac{\partial (f \oplus w_3\odot g)}{\partial
L}$. Let $L_1, L_2$ be integer affine functions of $2$ variables and $c_1, c_2
\in \R$. Call $L'_1 = L_1 + c_1 w_3, L'_2 = L_2 + c_2 w_3$. Then,
by~\eqref{eq:partial}, we have that the minimum at $ \frac{\partial f}{\partial
L_i}(q) \oplus l \odot \frac{\partial g}{\partial (L_i+c_i)}(q)$ is attained
twice for $i=1,2$. It is not difficult to see that the two conditions in the
statement of the Theorem hold, separating the arguments for the different types
of functions $L'$. Reciprocally, assume that these conditions hold for all
$4$-tuples of the form $(L_1, L_2, c_1, c_2)$ and let $q$ be an intersection
point. Then, any $(q_1, q_2, l)$ belongs to the tropical surfaces defined by
$f\oplus w_3\odot g$ and $\frac{\partial (f\oplus w_3\odot g)}{\partial (w_3 +
\b)}$ for all constants $\b$. Hence, it remains to check the Euler derivatives
when $L'$ for which $L \not=0$.

Let $l_1$ be the maximum of $\frac{\partial f}{\partial L}(q) - \frac{\partial
g}{\partial (L+c)}(q)$ for all $L'=L+w_3c$ of type 1. If there are no $L'$ of
type 1 set $l_1=-\infty$. Let $l_2$ be the minimum of $\frac{\partial
f}{\partial L}(q) - \frac{\partial g}{\partial (L+c)}(q)$ for all $L'=L+w_3c$ of
type 2. If there are no $L'$ of type 2, set $l_2=\infty$. Finally, let $L'_1,
\ldots, L'_r$ be the integral affine functions of type 4 whose supports
represent all possible supports with this type, and write $L'_i=L_i+c_i$. Call
$l_{4}(L'_i) = \frac{\partial f}{\partial L_i}(q) - \frac{\partial g}{\partial
(L_i+c_i)}(q)$. Then, it is clear that there exists an $l$ such that $(q_1, q_2,
l)$ belongs to all partial derivatives if and only if \[l_1\leq l_{4}(L'_1) =
l_{4}(L'_2) = \ldots = l _{4}(L'_r) \leq l_2\] We have to translate these
conditions into Jacobian like equations.

Let $L_1'=L_1+w_3c_1$, $L_2'=L_2 + w_3c_2$ be two integral affine linear
functions. If $L_1$ and $L_2$ are of the same type 1 or 2, or if one of the
types is 3, then it always happens that $q$ is a point in the curve
$\frac{\partial f}{\partial L_1}\odot \frac{\partial g}{\partial (L_2+c_2)}(q)
\oplus \frac{\partial f}{\partial L_2}\odot \frac{\partial g}{\partial
(L_1+c_1)}(q)$, because we are adding two polynomials for which $q$ is a
``zero''. If $L'_1$ is of type $1$ and $L'_2$ is of type 2 then: $\frac{\partial
f}{\partial L_1}$ and $\frac{\partial g}{\partial (L_2+c_2)}$ attain its minimum
at least twice in $q$, and $\frac{\partial f}{\partial L_2}$ and $\frac{\partial
g}{\partial (L_1+c_1)}$ attains its minimum once in $q$. Hence $q$ is a point in
$\frac{\partial f}{\partial L_1}\odot\frac{\partial g}{\partial (L_2+c_2)}
\oplus \frac{\partial f}{\partial L_2}\odot\frac{\partial g}{\partial
(L_1+c_1)}$ if and only if $\frac{\partial f}{\partial L_1}\odot\frac{\partial
g}{\partial (L_2+c_2)}(q) \leq \frac{\partial f}{\partial
L_2}\odot\frac{\partial g}{\partial (L_1+c_1)}(q)$. This is equivalent to
$\frac{\partial f}{\partial L_1}(q) + \frac{\partial g}{\partial (L_2+c_2)}(q)
\leq \frac{\partial f}{\partial L_2}(q) + \frac{\partial g}{\partial
(L_1+c_1)}(q)$. So, $\frac{\partial f}{\partial L_1}(q) - \frac{\partial
g}{\partial (L_1+c_1)}(q) \leq \frac{\partial f}{\partial L_2}(q) -
\frac{\partial g}{\partial (L_2+c_2)}(q)$. This happens for every pair of
integral affine functions $L_1$ of type 1 and $L_2$ of type 2 if and only if
$l_1\leq l_2$. Let $L'_1$ be of type 1 and $L'_2$ of type 4. Then $q$ is a point
in $\frac{\partial f}{\partial L_1}\odot\frac{\partial g}{\partial (L_2+c_2)}
\oplus \frac{\partial f}{\partial L_2}\odot\frac{\partial g}{\partial
(L_1+c_1)}$ for all $L'_1$ of type 1 if and only if $l_1\leq l_4(L'_2)$. Let
$L'_1$ be of type 2 and $L'_2$ of type 4. Then $q$ is a point in $\frac{\partial
f}{\partial L_1}\odot\frac{\partial g}{\partial (L_2+c_2)} \oplus \frac{\partial
f}{\partial L_2}\odot\frac{\partial g}{\partial (L_1+c_1)}$ for all $L'_1$ of
type 2 if and only if $l_4(L'_2) \leq l_2$. Finally, if both equations are of
type 4, then the condition $l_4(L_1')=l_4(L_2')$ translates into:
$\frac{\partial f}{\partial L_1}(q) - \frac{\partial g}{\partial (L_1+c_1)}(q) =
\frac{\partial f}{\partial L_2}(q) - \frac{\partial g}{\partial (L_2+c_2)}(q)$.
We deduce that $\frac{\partial f}{\partial L_1}\odot \frac{\partial g}{\partial
(L_2+c_2)}(q) = \frac{\partial f}{\partial L_2}\odot \frac{\partial g}{\partial
(L_1+c_1)}(q)$, which ends the proof.
\end{proof}

Note that in the last equality of the above proof, there is only one monomial on
each side of the equality where the minimum is attained. If this monomial
happens to be the same on both sides, we cannot ensure that $q$ is on the
variety $\frac{\partial f}{\partial L_1}\odot \frac{\partial g}{\partial
(L_2+c_2)}\oplus \frac{\partial f}{\partial L_2}\odot \frac{\partial g}{\partial
(L_1+c_1)}$. Both items in the statement of
Theorem~\ref{teo:non_transversal_intersection_nontrivial_case} are similar but
the first one concerns tropical varieties, while the second one needs to deal
with equalities of the values taken by two tropical polynomials, that do not
represent tropical varieties. A more homogeneous approach can be given if we use
the supertropical algebra introduced by Izhakian and then developed by Izhakian
and Rowen, to unify both conditions.
 
Recall that one can consider an {\em extended tropical semiring} $(\T', \oplus',
\odot')$ constructed from our tropical ring $(\T, \oplus, \odot)$
\cite{tropical-arithmetic-Zur}. This semiring structure has a partial idempotent
addition that distinguishes between sums of similar elements and sums of
different elements. Set theoretically, this bigger semiring $\T'$ is composed
from the disjoint union of two copies of $\R$, denoted $\R$ and $\R^\nu$, plus
the neutral element for the sum $\infty$. There is a natural bijection $\nu: \R
\to \R^\nu$ and the operation $\oplus'$ verifies $a \oplus' a = \nu(a)$, for all
$a \in \R$. The elements in $\R^\nu$ are called \emph{ghosts}, so $a {\oplus}'a$
is a ghost element. This terminology reflects the idea that in a field $\K$ with
a valuation, if two elements have the same valuation, we cannot predict in
general the valuation of their sum. We refer to
\cite{tropical-arithmetic-Zur,Supertropical-algebra} for further details, in
particular for the full definition of the operations in this supertropical
algebra.

An element $q \in \T'^d$ lies in the variety $\mathcal{T}'(h)$ defined by a
supertropical polynomial $h = \bigoplus'_{i\in A} p_i \odot' w^i$ with a
finite support set $A \in \Z^d$, when $h(q)$ is a ghost element. Given $A_1,
A_2$ as before and two supertropical polynomials $f, g$ with respective supports
in $A_1, A_2$ and coefficients in $\R$ (the so called tangible elements of
$\T'$), we can mimic our previous definitions. Thus, a point $q \in \R^2$ is
said to be a non-transversal intersection of $f$ and $g$ if there exists $l \in
\R$ such that $(\frac{\partial (f \oplus' w_3 \odot'g)}{\partial L'} )(q_1,
q_2, l)$ is a ghost element for any integer affine linear form $L'$ in $3$
variables.

We can translate Theorem~\ref{teo:non_transversal_intersection_nontrivial_case}
in the following terms:

\begin{theorem}\label{teo:non_transversal_intersection_supertropical}
Let $f, g$ be two supertropical bivariate polynomials with respective supports
$A_1, A_2$ and let $q \in \mathcal{T}'(f) \cap \mathcal{T}'(g)$. Then $q$ is a
non-transversal intersection point if and only if for all $4$-tuples $(L_1, L_2,
c_1, c_2)$, it holds that $q$ lies in the variety defined by the supertropical
polynomial $\frac{\partial f}{\partial L_1}{\odot}' \frac{\partial{g}}{\partial
(L_2+c_2)}$ ${\oplus}' \frac{\partial f}{\partial L_2}{\odot}' \frac{\partial
g}{\partial (L_1+c_1)}$; that is, the value of this polynomial at $q$ is a
ghost.
\end{theorem}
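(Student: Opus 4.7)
The plan is to derive this statement directly from Theorem~\ref{teo:non_transversal_intersection_nontrivial_case} by re-expressing its two bullet conditions in the supertropical language. The unifying observation is that for a supertropical polynomial $p$ with tangible coefficients, $p(q)$ is a ghost element precisely when the tropical minimum of its monomials at $q$ is attained at least twice. In particular, each Euler derivative $\frac{\partial f}{\partial L_i}(q)$ is a ghost iff $q \in \mathcal{T}(\frac{\partial f}{\partial L_i})$, and similarly for $\frac{\partial g}{\partial (L_j + c_j)}$; consequently, the set $\mathcal{A}(q)$ corresponds to those 4-tuples for which at least one of the four factors appearing in the definition of $h$ is ghost in $\T'$, while $\mathcal{B}(q)$ corresponds to those for which all four factors are tangible.

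Write
\[
h := \frac{\partial f}{\partial L_1}{\odot}' \frac{\partial g}{\partial (L_2 + c_2)} {\oplus}' \frac{\partial f}{\partial L_2}{\odot}' \frac{\partial g}{\partial (L_1 + c_1)}
\]
and set $a_1 := \frac{\partial f}{\partial L_1}(q){\odot}' \frac{\partial g}{\partial (L_2 + c_2)}(q)$ and $a_2 := \frac{\partial f}{\partial L_2}(q){\odot}' \frac{\partial g}{\partial (L_1 + c_1)}(q)$, so that $h(q) = a_1 {\oplus}' a_2$. A product of two elements in $\T'$ is a ghost iff at least one factor is a ghost, hence $a_i$ is ghost exactly when one of its two factors lies in the corresponding supertropical variety.

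For a 4-tuple in $\mathcal{B}(q)$ all four factors are tangible, hence so are $a_1, a_2$, and $h(q)$ is ghost iff $a_1 = a_2$, which is exactly the second bullet of Theorem~\ref{teo:non_transversal_intersection_nontrivial_case}. For a 4-tuple in $\mathcal{A}(q)$ at least one of $a_1, a_2$ is ghost; a short case analysis on which of $a_1, a_2$ is strictly smaller, equal, or strictly greater (and which is ghost) shows that $h(q)$ is ghost precisely when the tropical minimum of the fully expanded polynomial $h$ is attained at least twice, that is, when $q \in \mathcal{T}(h)$ in the sense of the first bullet of Theorem~\ref{teo:non_transversal_intersection_nontrivial_case}. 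The key point is that a ghost value of $a_i$ encodes exactly the fact that the tropical minimum in the corresponding product is attained at least twice, and this propagates to the expanded sum whenever the ghost term is not strictly dominated by the other.

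Combining the two cases, the supertropical assertion that $h(q)$ is a ghost for every 4-tuple $(L_1, L_2, c_1, c_2)$ is equivalent to the conjunction of the two bullets of Theorem~\ref{teo:non_transversal_intersection_nontrivial_case}, so one can invoke that theorem to conclude. Finiteness of the check reduces to the same finiteness result, since only finitely many distinct Euler derivatives of $f$ and $g$ occur. The main obstacle is the bookkeeping in the $\mathcal{A}(q)$ case: one must carefully verify that the various subcases (one $a_i$ ghost, both ghost, minima coinciding or strictly ordered) all reduce to ``minimum attained twice among the expanded monomials'', so that the supertropical variety $\mathcal{T}'(h)$ genuinely coincides with the tropical variety $\mathcal{T}(h)$ obtained by expanding the products before evaluating.
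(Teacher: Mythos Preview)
Your proposal is correct and follows exactly the approach the paper intends: the paper presents Theorem~\ref{teo:non_transversal_intersection_supertropical} as a direct translation of Theorem~\ref{teo:non_transversal_intersection_nontrivial_case} into supertropical language and does not supply a separate proof. Your argument spells out this translation, correctly matching the $\mathcal{B}(q)$ case (all four factors tangible, so $h(q)$ ghost iff $a_1=a_2$) with the second bullet, and the $\mathcal{A}(q)$ case (at least one $a_i$ ghost, so $h(q)$ ghost iff the strictly smaller term is ghost or the $\nu$-values coincide) with the first bullet; the case analysis you describe is routine and accurate.
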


\noindent {\bf Acknowledgments:} This work started during our stays at the
Mathematical Sciences Research Institute (MSRI) during the Special Semester on
Tropical Geometry, where we enjoyed a wonderful working atmosphere. AD is
grateful for the support of the Simons Foundation during that period. We thank
Cristian Czubara for useful remarks and Antonio Laface for stimulating
questions. We also thank the referees for insightful remarks.

\end{document}